\documentclass[12pt]{article}

\usepackage[english]{babel}
\usepackage{amssymb}
\usepackage{amsfonts,amsmath,mathtools}
\usepackage{graphics}
\usepackage{lipsum}
\usepackage[ruled]{algorithm2e}
\usepackage{float}
\usepackage{url}
\usepackage{subfig}
\usepackage{xcolor}
\usepackage[export]{adjustbox}
\usepackage{tikz-cd}
\usepackage{titlesec}
\usepackage[all]{xy}

\titleformat{\section}
{\centering\normalfont\scshape}{\thesection.}{0.5em}{}
\titleformat{\subsection}[runin]{\normalfont}{\thesubsection.}{0.5em}{\textbf}[]

\usepackage{etoolbox}
\apptocmd{\thebibliography}{\setlength{\itemsep}{-3pt}}{}{}

\usepackage{bm}
\usepackage{enumitem}
\usepackage{faktor}
\usepackage{comment}

\usepackage[top=3cm,bottom=3cm,left=3.2cm,right=3.2cm,asymmetric]{geometry}

\newcommand\blfootnote[1]{%
	\begingroup
	\renewcommand\thefootnote{}\footnote{#1}%
	\addtocounter{footnote}{-1}%
	\endgroup
}

\usepackage{titlesec}
\titleformat{\section}
{\centering\normalfont\scshape}{\thesection.}{0.5em}{}

\newtheorem{Lem}{Lemma}[section]
\newtheorem{Prop}[Lem]{Proposition}
\newtheorem{Cor}[Lem]{Corollary}
\newtheorem{Thm}[Lem]{Theorem}

\newtheorem{Rem}[Lem]{Remark}

\newtheorem{Expl}[Lem]{Example}

\newenvironment{proof}[1][Proof]{\textrm{\em #1.} }{\hfill $\Box$\medskip\medskip}

\newcommand\Tor{\operatorname{Tor}}

\newcommand\Mon{{\operatorname{Mon}}}

\newcommand\supp{{\operatorname{supp}}}

\newcommand\lex{{\operatorname{lex}}}

\newcommand\pd{{\operatorname{pd}}}
\newcommand\reg{{\operatorname{reg}}}

\newcommand\depth{{\operatorname{depth}}}


\def\NZQ{\mathbb}
\def\NN{{\NZQ N}}
\def\ZZ{{\NZQ Z}}
\def\FF{{\NZQ F}}

\let\emptyset\varnothing

\def\alt{\textup{height}}

\def\p{\mathfrak{p}}
\def\q{\mathfrak{q}}

\captionsetup[subfigure]{labelformat=empty}
\captionsetup[table]{position=top}
\captionsetup[subtable]{position=top}

\begin{document}
\title{\bf\normalsize\MakeUppercase{A note on minimal resolutions of vector-spread Borel ideals}}

\author{Marilena Crupi, Antonino Ficarra}	

\newcommand{\Addresses}{{
\footnotesize
\textsc{Department of Mathematics and Computer Sciences, Physics and Earth Sciences, University of Messina, Viale Ferdinando Stagno d'Alcontres 31, 98166 Messina, Italy}
\begin{center}
 \textit{E-mail addresses}: \texttt{mcrupi@unime.it}; \texttt{antficarra@unime.it}
\end{center}
}}
\date{}
\maketitle
\Addresses

\begin{abstract}
We consider vector--spread Borel ideals. We show that these ideals have linear quotients and thereby we determine the graded Betti numbers and the bigraded Poincar\'e series. A characterization of the extremal Betti numbers of such a class of ideals is given. Finally, we classify all Cohen--Macaulay vector--spread Borel ideals.
\blfootnote{
	\hspace{-0,3cm} \emph{Keywords:} monomial ideals, vector--spread Borel ideals, Cohen--Macaulay ideals, extremal Betti numbers.
	
	\emph{2020 Mathematics Subject Classification:} 05B35, 05E40, 13B25, 13D02, 68W30.
}

\end{abstract}

\maketitle

\section*{Introduction}
In this note we study the class of vector--spread Borel ideals introduced in \cite{F1} as a generalization of the class of $t$--spread ideals, where $t$ is a non negative integer \cite{EHQ} (see, also, \cite{AFC3, AEL, DHQ} and the reference therein). Let $S=K[x_1,\dots,x_n]$ be the standard graded polynomial ring over a fixed field $K$ and let ${\bf t}=(t_1,t_2,\dots,t_{d-1})\in\ZZ_{\ge0}^{d-1}$, with $d \ge 2$,  a $(d-1)$--tuple whose entries are non negative integers. A monomial $u=x_{j_1}x_{j_2}\cdots x_{j_\ell}$ ($1\le j_1\le j_2\le\cdots\le j_\ell\le n$) of degree $\ell\le d$ of $S$ is called a \textit{vector--spread monomial of type ${\bf t}$} or simply a \textit{${\bf t}$--spread monomial} if $j_{i+1}-j_{i}\ge t_{i}$, for $i=0,\dots,\ell-1$.
A \textit{${\bf t}$--spread monomial ideal} is a monomial ideal generated by ${\bf t}$--spread monomials.
For instance, $I=(x_1x_3^2x_5,x_1x_3^2x_6,x_1x_4x_5)$ is a $(2,0,1)$--spread monomial ideal of the polynomial ring  $S=K[x_1, \ldots, x_5]$, but it is not $(2,1,1)$--spread as $x_1x_3^2x_5\in G(I)$ is not $(2,1,1)$--spread. One can note that any monomial (ideal) is ${\bf 0}$--spread, where ${\bf 0}=(0,0,\dots,0)$. If $t_i\ge1$, for all $i$, a ${\bf t}$--spread monomial (ideal) is a \textit{squarefree} monomial (ideal). A ${\bf t}$--spread monomial ideal $I$ of $S$ is said \emph{vector--spread Borel ideal of type} ${\bf t}$ or simply a \textit{${\bf t}$--spread strongly stable ideal} if for any ${\bf t}$--spread monomial $u\in I$, and all $j<i$ such that $x_i$ divides $u$ and $x_j(u/x_i)$ is ${\bf t}$--spread, then $x_j(u/x_i)\in I$. For ${\bf t}={\bf 0}=(0,0,\dots,0)$ (${\bf t}={\bf 1}=(1,1,\dots,1)$) one obtains the classical notion of strongly stable (squarefree strongly stable) ideal \cite{JT}.\\
We study the graded Betti numbers of a ${\bf t}$--spread strongly stable ideal $I$ by combinatorial tools and therefore we obtain a formula for their computation. As a consequence we are able to give a characterization of the extremal Betti numbers of such an ideal $I$. Finally, the Cohen--Macaulay ${\bf t}$--spread strongly stable ideals are classified. Our approach is similar to the one of \cite{EHQ}. \\
In this note we discuss a unified concept to deal with the classes of monomial ideals which are among the most important in Combinatorics (stable, strongly stable ideals) in order to get more general results.\
Section \ref{sec1} contains some preliminaries and notions that will be used in the article. In Section \ref{sec2}, we review a result on the graded Betti numbers of a ${\bf t}$--spread strongly stable ideal stated in \cite{F1}. We propose a simpler method for determining a formula for the graded Betti numbers of such a class of monomial ideals (Corollary \ref{Cor:BettiNumbFormulaVectSpread}). A key result is Theorem \ref{VectSpreadLinearQuotients} which states that a ${\bf t}$--spread strongly stable ideal has linear quotients. The notion of \textit{vector--spread support} (Subsection 1.2) plays an essential role in this context. Indeed, the linear quotients of vector--spread Borel ideals can be expressed in terms of vector--spread supports (Corollary \ref{cor:set(uk)vect}). 
Moreover, the bigraded Poincar\'e series of such ideals is also determined 
(Corollary \ref{Cor:Poincare}). In Section \ref{sec3}, we obtain a characterization of the extremal Betti numbers of ${\bf t}$--spread strongly stable ideals (Proposition \ref{CharacExtremalBettiVectSpread}) by the results in Section \ref{sec2}. Finally, Section \ref{sec4} contains one of the main result in the article. We analyze the Cohen--Macauleyness of ${\bf t}$--spread strongly stable ideals via the formula of the graded Betti numbers described in Section \ref{sec2}. Indeed, we obtain a classification (Theorem \ref{Thm:CMVectSpread}) by investigating the height and the projective dimension of such ideals.

\section{Preliminaries}\label{sec1}
Throughout the article, we denote by $S=K[x_1,\dots,x_n]$ the standard graded polynomial ring over a field $K$ and by $\Mon(S)$ ($\Mon_d(S)$) the set of all monomials (of degree $d$) in $S$. 
\subsection{A glimpse to graded Betti numbers.} Given a monomial $u=x_{1}^{a_1}x_{2}^{a_2}\cdots x_{n}^{a_n}$ of $S$, 
the \textit{support} of $u$ is the set $\supp(u)=\{i:a_i>0\}=\{i:x_i\ \textup{divides}\ u\}$. 
We denote by $\max(u)$ ($\min(u)$) the maximal (minimal) integer $i\in \supp(u)$. \\
For a monomial ideal $I$ of $S$, $G(I)$ denotes the unique minimal set of monomial generators of $I$ and we set $G(I)_j=\{u\in G(I):\deg(u)=j\}$. It is known that a monomial ideal $I$ of $S$ has a unique minimal graded free $S$--resolution
$$
\FF:0\rightarrow F_p\xrightarrow{\ d_p\ }F_{p-1}\xrightarrow{\ d_{p-1}\ }\cdots\xrightarrow{\ d_1\ }F_0\xrightarrow{\ d_0\ }I\rightarrow0,
$$
where $F_i=\bigoplus_jS(-j)^{\beta_{i,j}(I)}$. 
The numbers $\beta_{i,j}(I)=\dim_K\Tor(K,I)_j$ are called the \textit{graded Betti numbers} of $I$. For all $i$, $\beta_i(I)=\sum_j\beta_{i,j}(I)$ is the \textit{$i$th total Betti number} of $I$. 
One defines the \textit{projective dimension} and the \textit{regularity} of $I$ as follows
\begin{align*}
\pd(I)&=\max\{i:\beta_{i,j}(I)\ne0,\ \textup{for some}\ j\}=\max\{i:\beta_i(I)\ne0\},\\
\reg(I)&=\max\{j-i:\beta_{i,j}(I)\ne0\}=\max\{j:\beta_{i,i+j}(I)\ne0\ \textup{for some}\ i\}.
\end{align*}
These algebraic invariants have been refined in \cite{BCP} by the notion of \emph{extremal Betti number}.
A graded Betti number $\beta_{k,k+\ell}(I)\ne 0$ of a monomial ideal $I$ of $S$ is called \textit{extremal} if $\beta_{i,i+j}(I)=0$ for all $i\ge k$, $j\ge\ell$ such that $(i,j)\ne(k,\ell)$.

Let $I$ be a monomial ideal. It is known that $\textup{depth}(S/I)\le\dim(S/I)$. If the equality holds, we say that $S/I$ is a \textit{Cohen--Macaulay ring} and $I$ is a \textit{Cohen--Macaulay ideal}. 

\subsection{A glimpse to vector--spread monomial ideals.} Let ${\bf t}=(t_1,t_2,\dots,t_{d-1})\in\ZZ_{\ge0}^{d-1}$, $d \ge 2$,  a $(d-1)$--tuple whose entries are non negative integers. Let us write a monomial $u \in S$ as 
$u=x_{j_1}x_{j_2}\cdots x_{j_\ell}$, $1\le j_1\le j_2\le\cdots\le j_\ell\le n$. 
We will maintain this convention throughout the article. 
%
%
%

Let $T=K[x_1,x_2,\dots,x_n,\dots]$ be the polynomial ring in infinitely many variables. Denote by $\Mon(T;{\bf t})$ the set of all ${\bf t}$--spread monomials of $T$ and by $\Mon(S;{\bf t})$ the set of all ${\bf t}$--spread monomials of $S$. Furthermore, for all $0\le\ell\le d$, we define the following sets
\begin{align*}
\Mon_{\ell}(T;{\bf t})&=\big\{u\in\Mon(T;{\bf t}):\deg(u)=\ell\big\},\\
\Mon_{\ell}(S;{\bf t})&=\big\{u\in\Mon(S;{\bf t}):\deg(u)=\ell\big\}.
\end{align*}
Let us denote by $M_{n,\ell,{\bf t}}$ the set of all ${\bf t}$--spread monomials of $S$ having degree $\ell$. One may observe that $M_{n,\ell,{\bf t}}=\emptyset$ for $\ell>d$. 
In order to compute the cardinality of the set $M_{n,\ell,{\bf t}}$, one can consider a special \textit{shifting operator} (see \cite{AHH, EHQ}). More in detail, one defines the map $\sigma_{{\bf 0},{\bf t}}:\Mon(T;{\bf 0})\rightarrow\Mon(T;{\bf t})$, by setting $\sigma_{{\bf 0},{\bf t}}(1)=1$, $\sigma_{{\bf 0},{\bf t}}(x_i)=x_i$ and
\begin{align*}
\sigma_{{\bf 0},{\bf t}}(x_{j_1}x_{j_2}\cdots x_{j_{\ell}})=\prod_{k=1}^{\ell}x_{j_k+\sum_{s=1}^{k-1}t_s},
\end{align*}
for all monomials $u=x_{j_1}x_{j_2}\cdots x_{j_{\ell}}\in\Mon(T;{\bf 0})$ with $2\le\ell\le d$.
The map $\sigma_{{\bf 0},{\bf t}}$ is bijective and its inverse is the map $\sigma_{{\bf t},{\bf 0}}: \Mon(T;{\bf t})\rightarrow\Mon(T;{\bf 0})$ defined as follows: $\sigma_{{\bf t},{\bf 0}}(1)=1$, $\sigma_{{\bf t},{\bf 0}}(x_i)=x_i$, for all $i\in\NN$, and $\sigma_{{\bf t},{\bf 0}}(x_{j_1}x_{j_2}\cdots x_{j_{\ell}})$ $=\prod_{k=1}^{\ell}x_{j_k-\sum_{s=1}^{k-1}t_s}$, for all monomials $u=x_{j_1}x_{j_2}\cdots x_{j_{\ell}}\in\Mon(T;{\bf t})$ with 
$2\le\ell\le d$. \\
%
In particular, the restriction $\sigma_{{\bf t},{\bf 0}}|_{M_{n,\ell,{\bf t}}}$ is a injective map and its image is equal to the set $M_{n-(t_1+t_2+\ldots+t_{\ell-1}),\ell,{\bf 0}}=\Mon_{\ell}(K[x_1,\dots,x_{n-(t_1+t_2+\ldots+t_{\ell-1})}])$. Thus
\begin{equation}\label{eq:MnellVectCard}
|M_{n,\ell,{\bf t}}|=\binom{n+(\ell-1)-\sum_{j=1}^{\ell-1}t_j}{\ell}, \quad \mbox{for $0\le\ell\le d$}.
\end{equation}
For more details on this topic see \cite{F1}.


If $u_1,\dots,u_m$ are ${\bf t}$--spread monomials of $S$, we denote by $B_{\bf t}(u_1,\dots,u_m)$ the smallest ${\bf t}$--spread strongly stable ideal of $S$ containing $u_1,\dots,u_m$ with respect to the inclusion. The monomials $u_1,\dots,u_m$ are called the \textit{${\bf t}$--spread Borel generators} of $B_{\bf t}(u_1,\dots,u_m)$. If $m=1$, $B_{\bf t}(u_1)=B_{\bf t}(u)$ is called a \textit{principal ${\bf t}$--spread Borel ideal}. It is clear that for each ${\bf t}$--spread strongly stable monomial ideal $I$ of $S$, one can uniquely determine ${\bf t}$--spread monomials $u_1,\dots,u_m\in S$ such that $I=B_{\bf t}(u_1,\dots,u_m)$. \\
Observe that for $u=x_{n-(t_{1}+t_2+\ldots+t_{\ell-1})}x_{n-(t_2+\ldots+t_{\ell-1})}\cdots x_{n-t_{\ell-1}}x_{n}$, the ideal $B_{\bf t}(u)$ contains all ${\bf t}$--spread monomials of degree $\ell\le d$. $B_{\bf t}(u)$ is called the \textit{${\bf t}$--spread Veronese ideal of degree $\ell$} and will be denoted by $I_{n,\ell,{\bf t}}$. Note that 
$I_{n,\ell,{\bf t}}\ne(0)$ if and only if $n\ge 1+\sum_{j=1}^{\ell-1}t_j$.\\

Now let us recall the pivotal notion of \textit{${\bf t}$--spread support} of a ${\bf t}$--spread monomial \cite[Definition 2.1]{F1}.

If $n$ is a positive integer $n$, $[n]$ denotes the set $\{1,2,\dots,n\}$. 
For $j \le k$ positive integers, we define $[j,k] = \{\ell\in\mathbb{N}:j\le\ell\le k\}$. 
We set $[j,k]= \emptyset$ if $j>k$.\\
Let $u=x_{j_1}x_{j_2}\cdots x_{j_{\ell}}\in M_{n,\ell,{\bf t}}$ be a ${\bf t}$--spread monomial of $S$. The \textit{${\bf t}$--spread support} of $u$ is the subset of $[n]$ defined as follows
$$
\supp_{\bf t}(u)=\textstyle\bigcup\limits_{i=1}^{\ell-1}\ \big[j_i,j_i+(t_i-1)\big].
$$
Note that $\supp_{\bf 0}(u)=\emptyset$. Furthermore, if $u$ is squarefree, $\supp_{\bf 1}(u)$ $=\supp(u/x_{\max(u)})$ $=\big\{j_1,j_2,\dots,j_{\ell-1}\big\}$, where ${\bf 1}=(1,1,\dots,1)\in\ZZ_{\ge0}^{d-1}$.

The notion vector--spread support has been introduced in \cite{CF1} in order to classify Cohen--Macaulay $t$--spread lexsegment ideals.


\section{The graded Betti numbers}\label{sec2}
In \cite{F1}, the graded Betti numbers of vector--spread strongly stable ideals have been determined by means of the Koszul homology. More precisely, if $I$ is a ${\bf t}$--spread strongly stable ideal of $S$, then the graded Betti numbers of $I$ are given by
$$
\beta_{i,i+j}(I)=\sum_{u\in G(I)_j}\binom{\max(u)-1-\sum_{\ell=1}^{j-1}t_\ell}{i},\quad \mbox{for all $i, j$}.
$$
The purpose of this section is to quickly obtain this formula 
using a different and easier method than the one used in \cite{F1}. The vector--spread support will be a key tool.

We need to fix some notations and recall some results from \cite{ET}. Let $I$ be a ${\bf t}$--spread strongly stable ideal of $S$. Let $G(I)=\{u_1>u_2>\dots>u_m\}$ be the minimal generating set of $I$ ordered with respect to the \textit{pure lexicographic order} \cite{JT}. For $k=1,\dots,m$, we set $J_k=(u_1,\dots,u_{k})$ and
$$
\textup{set}(u_k) = \big\{i\in[n]:x_i\in(u_1,\dots,u_{k-1}):u_k \big\}.
$$
Note that $\textup{set}(u_1)=\emptyset$. Our aim is to prove that $I$ has \textit{linear quotients}, \emph{i.e.}, 
$$
J_{k-1}:u_k=(u_1,u_2,\dots,u_{k-1}):u_k,
$$
is generated by variables, for all $k=2,\dots,m$. In such a case, by \cite[Lemma 1.5]{ET}, one has
\begin{equation}\label{eq:ETlinquot}
\beta_{i,i+j}(I)=\sum_{u\in G(I)_j}\binom{|\textup{set}(u)|}{i}, \quad \mbox{for all $i,j\ge0$}.
\end{equation}

We quote the next crucial lemma from \cite{F1} (see also \cite[Lemma 1.3]{EHQ}). 
It provides the existence of a \textit{standard decomposition} for a ${\bf t}$--spread monomial belonging to a ${\bf t}$--spread strongly stable ideal.
\begin{Lem}\label{LemG(I)M(I)}
	Let $I$ be a ${\bf t}$--spread strongly stable ideal of $S$, and $w\in I$ a ${\bf t}$--spread monomial. Then, there exist $u\in G(I)$ and $v\in\Mon(S)$ such that $w=uv$ and $\max(u)\le\min(v)$.
\end{Lem}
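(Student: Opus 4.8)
The plan is to build the required decomposition $w = uv$ with $\max(u) \le \min(v)$ by "peeling off" the largest-index variables of $w$ one at a time and descending through the strongly stable structure until we land inside $G(I)$. Write $w = x_{j_1} x_{j_2} \cdots x_{j_\ell}$ with $j_1 \le j_2 \le \cdots \le j_\ell$. Since $w \in I$, some minimal generator $u' \in G(I)$ divides $w$; write $w = u' v'$. If $\max(u') \le \min(v')$ we are done. Otherwise there is a variable occurring in $v'$ with index strictly smaller than $\max(u')$, and the idea is to move such a "wrongly placed" factor of $v'$ down into $u'$ while compensating: more precisely, one replaces the pair $(u', v')$ by $(u'', v'')$ where $u''$ is obtained from $u'$ by a single strongly stable move. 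The key point to check is that such a move keeps us inside $I$ and, crucially, that the relevant substitution produces a \textbf{${\bf t}$--spread} monomial, so that the defining property of ${\bf t}$--spread strongly stable ideals can legitimately be invoked.

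The cleanest way to organize this is by induction on $\deg(w)$, or alternatively on the quantity $\max(u') - \min(v')$ together with a secondary induction. First I would handle the base case $\deg(w) = 1$: then $w = x_i \in I$ forces $x_i \in G(I)$, so take $u = w$, $v = 1$. For the inductive step, take any $u' \in G(I)$ with $u' \mid w$ and set $v' = w/u'$. Let $s = \min(v')$, i.e., the smallest index of a variable dividing $v'$. If $s \ge \max(u')$ we are finished. If not, consider $t = \max(u')$; then $x_t \mid u'$ and $s < t$. The monomial $u'' := x_s (u'/x_t)$ has the same degree as $u'$. I claim $u''$ is ${\bf t}$--spread: this is where one must argue carefully, using that $w$ itself is ${\bf t}$--spread (so the gaps among the sorted exponents of $w$ are all large enough) and that replacing the top variable $x_t$ of $u'$ by the already-present smaller variable $x_s$ cannot decrease any of the relevant consecutive gaps within $u'$ — indeed $x_s$ sits among the lower-indexed variables and the new largest index of $u''$ is $\le t$. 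Granting ${\bf t}$--spreadness of $u''$, the defining property of ${\bf t}$--spread strongly stable ideals gives $u'' \in I$, hence $u'' v'' \in I$ where we absorb the lost factor appropriately: note $w = u'' \cdot (x_t/x_s) v'$, and $(x_t/x_s)v'$ is a genuine monomial since $x_s \mid v'$.

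The inductive bookkeeping is then arranged so that this operation strictly decreases a well-chosen nonnegative integer invariant — for instance the sum of the exponents of $w$ weighted by variable index, or more simply: after the move, the new "bad" generator $u''$ divides the same $w$ but its maximum index has strictly dropped (from $t$ to something $< t$), so finitely many iterations terminate in a pair with $\max(u) \le \min(v)$. One should double-check that at each stage the new generator we use is again a \emph{minimal} generator of $I$, or else replace it by a minimal generator dividing it (which only helps, as it makes $u$ smaller and $v$ larger). The main obstacle is precisely the verification that each strongly stable move stays within the ${\bf t}$--spread world; once that is secured, the descent argument is routine. This is exactly the content imported from \cite{F1} and modeled on \cite[Lemma 1.3]{EHQ}, so in the write-up I would either cite it directly or reproduce this short descent.
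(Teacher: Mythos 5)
Your descent breaks at exactly the step you flag as "the key point to check": the claim that $u''=x_s(u'/x_t)$ is ${\bf t}$--spread is false in general, and the heuristic you offer for it (replacing the top variable by an already-present smaller one "cannot decrease any of the relevant consecutive gaps") ignores that inserting $x_s$ at the bottom of $u'$ shifts every later gap one position to the right, so a gap that only had to be $\ge t_{k-1}$ must now be $\ge t_k$, and ${\bf t}$ need not be monotone. Concrete counterexample: let ${\bf t}=(0,5,0,0)$, $u'=x_6x_7x_{12}$ and $I=B_{\bf t}(u')$. Every monomial of $I$ has degree at least $3$, so $u'\in G(I)$. The monomial $w=x_1^2x_6x_7x_{12}=u'\cdot x_1^2$ lies in $I$ and is ${\bf t}$--spread (sorted gaps $0,5,1,5$ against $0,5,0,0$). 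Here $v'=x_1^2$, $s=1<12=\max(u')$, and your swap produces $u''=x_1x_6x_7$, which is \emph{not} ${\bf t}$--spread since $7-6=1<t_2=5$. Hence the strongly stable property cannot be invoked and the induction never gets started. (The lemma itself is of course still true: in this example $x_1^2x_6\in G(I)$ divides $w$ with $\max(x_1^2x_6)=6\le 7=\min(x_7x_{12})$.) Your intuition is correct in the uniform case $t_1=\cdots=t_{d-1}$ of \cite{EHQ}, where the thresholds do not change under the position shift, but it is precisely the vector case that this lemma is about.

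Two further remarks. First, even granting the spreadness claim, your termination measure is not quite right: when some $t_i=0$ the variable $x_{\max(u')}$ may occur with multiplicity, in which case $\max(u'')=\max(u')$ and the maximal index does not strictly drop, so you would need a different invariant (or a more careful choice of the divisor $u'$, e.g.\ extremal with respect to a monomial order, as in Eliahou--Kervaire-type arguments). Second, note that the paper under review does not prove this lemma at all: it quotes it from \cite{F1} (modeled on \cite[Lemma 1.3]{EHQ}). So citing \cite{F1}, as you suggest at the end, is consistent with the paper; but the "short descent" you sketch as an alternative is not a valid proof as written, because the single swap $\max(u')\mapsto\min(v')$ does not stay inside the class of ${\bf t}$--spread monomials.
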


Now we are in position to prove the main result of this section.
\begin{Thm}\label{VectSpreadLinearQuotients}
	Let $I$ be a ${\bf t}$--spread strongly stable ideal of $S$. Then $I$ has linear quotients, in particular it is componentwise linear.
\end{Thm}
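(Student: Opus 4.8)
The plan is to show that for each $k=2,\dots,m$, the colon ideal $J_{k-1}:u_k=(u_1,\dots,u_{k-1}):u_k$ is generated by variables. Since an arbitrary monomial ideal is componentwise linear whenever it has linear quotients with respect to a degree-compatible ordering of its generators—and the pure lexicographic order refines the degree partial order on monomials of a fixed-degree ideal in the relevant way—the "in particular" clause will follow once linear quotients are established together with the standard fact (which I may cite) that linear quotients imply componentwise linearity. So the whole burden is the linear quotients claim.

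First I would reduce the problem to understanding a single monomial generator of $J_{k-1}:u_k$. Fix $k$ and let $w\in J_{k-1}:u_k$ be a monomial; then $w u_k\in J_{k-1}=(u_1,\dots,u_{k-1})$, so $u_j$ divides $w u_k$ for some $j<k$. The goal is to produce a variable $x_i$ with $i\in\textup{set}(u_k)$ that already divides $w$. The natural strategy, parallel to \cite{EHQ}, is: among the generators $u_j$ ($j<k$) dividing $wu_k$, pick one and use the ${\bf t}$--spread strongly stable property to "walk" from $u_k$ toward $u_j$ by a single admissible exchange $x_i(u_k/x_{i'})$, landing on a ${\bf t}$--spread monomial that is still in $I$ and is $>u_k$ in pure lex order, hence lies in $J_{k-1}$. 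This exhibits $x_i\in J_{k-1}:u_k$ with $i\in\textup{set}(u_k)$ and $x_i\mid w$. Concretely, write $u_k=x_{a_1}\cdots x_{a_\ell}$ and compare exponent vectors with the chosen $u_j$; since $u_j\mid wu_k$ and $u_j$ lex-precedes $u_k$, there is a smallest variable index $i$ where $u_j$ "uses more" than $u_k$, i.e. $i\in\supp(w)$ or $i$ comes from $u_j$; one then checks that replacing the appropriate larger-index variable $x_{a_s}$ of $u_k$ by $x_i$ produces a ${\bf t}$--spread monomial. Here is where Lemma \ref{LemG(I)M(I)} enters: I would apply the standard decomposition to $w u_k$, writing $wu_k=u\,v$ with $u\in G(I)$, $\max(u)\le\min(v)$, which both guarantees a clean choice of the generator $u$ dividing $wu_k$ and controls how the variables of $v$ (a "tail" of large indices) interact with the ${\bf t}$--spread condition, so that the single exchange stays ${\bf t}$--spread.

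The key steps in order: (1) fix $k$ and a monomial $w\in J_{k-1}:u_k$; (2) apply Lemma \ref{LemG(I)M(I)} to the ${\bf t}$--spread monomial $wu_k$ to get $wu_k=uv$ with $u\in G(I)$ and $\max(u)\le\min(v)$, noting $u=u_j$ for some $j<k$ after checking $u\ne u_k$ via a lex/degree comparison; (3) compare the variable sequences of $u_j$ and $u_k$ to locate an index $i\in\supp(w)$ with $i<$ some variable $x_{a_s}$ of $u_k$ that can be swapped out; (4) verify that $x_i(u_k/x_{a_s})$ is ${\bf t}$--spread—this uses the position of $i$ relative to the neighbours $a_{s-1},a_{s+1}$ and the spread constants $t_{s-1},t_s$, and is exactly the point where the shifting/decomposition structure is needed; (5) conclude $x_i(u_k/x_{a_s})\in I$ by ${\bf t}$--spread strong stability and $x_i(u_k/x_{a_s})>u_k$ in pure lex, so it lies in $J_{k-1}$, giving $x_i\in J_{k-1}:u_k$ with $x_i\mid w$; (6) iterate (divide $w$ by $x_i$ and repeat) to conclude $w$ lies in the ideal generated by the variables of $\textup{set}(u_k)$, so $J_{k-1}:u_k$ is generated by variables; (7) deduce componentwise linearity.

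The main obstacle is step (4): ensuring that the single admissible exchange producing a lex-larger monomial remains ${\bf t}$--spread. Unlike the ordinary strongly stable case, a swap $x_i(u_k/x_{a_s})$ can violate the gap conditions $j_{r+1}-j_r\ge t_r$ at the touched positions, so the choice of which variable of $u_k$ to replace must be made carefully—typically one must pick the largest-index variable of $u_k$ lying strictly above $i$, or use the decomposition $wu_k=uv$ to argue the tail $v$ provides enough "room." I expect that once the right index $i$ and the right replaced variable $x_{a_s}$ are pinned down (guided by the comparison with $u_j$ and the constraint $\max(u_j)\le\min(v)$), the ${\bf t}$--spread verification becomes a short inequality chain among the $a_r$'s and $t_r$'s, after which everything else is formal.
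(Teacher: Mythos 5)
Your overall shape---order $G(I)$ by pure lex and, for each colon ideal $J_{k-1}:u_k$, exhibit a variable via a single ${\bf t}$--spread exchange on $u_k$---is the same as the paper's, but the proposal has genuine gaps. First, step (2) applies Lemma \ref{LemG(I)M(I)} to $wu_k$; that lemma only applies to ${\bf t}$--spread monomials of $I$, and $wu_k$ is in general not ${\bf t}$--spread (it need not even have degree $\le d$), so your mechanism for choosing the comparison generator is not legitimate. It is also unnecessary: any $u_j\in\{u_1,\dots,u_{k-1}\}$ dividing $wu_k$ works, since all that is used is $u_j>_{\lex}u_k$; the paper simply reduces to the canonical colon generators $u_s/\gcd(u_k,u_s)$, $s<k$. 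Second, and more importantly, the heart of the argument---your steps (3)--(4)---is exactly what you leave open ("I expect that once the right index is pinned down..."). The paper pins the exchange down as follows: writing $u_s=x_{i_1}\cdots x_{i_p}$, $u_k=x_{j_1}\cdots x_{j_q}$ with $u_s>_{\lex}u_k$ and letting $\ell$ be the first position where they differ (so $i_1=j_1,\dots,i_{\ell-1}=j_{\ell-1}$ and $i_\ell<j_\ell$), one takes $v=x_{i_\ell}(u_k/x_{j_\ell})$; the ${\bf t}$--spread check is then a two-line inequality using spreadness of \emph{both} monomials, namely $i_\ell-j_{\ell-1}=i_\ell-i_{\ell-1}\ge t_{\ell-1}$ and $j_{\ell+1}-i_\ell>j_{\ell+1}-j_\ell\ge t_\ell$, and the same first-difference comparison shows $x_{i_\ell}$ divides $u_s/\gcd(u_k,u_s)$. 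Without specifying this choice, step (4) is a hope, not a proof.

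Finally, your step (5) infers $v\in J_{k-1}$ from ``$v\in I$ and $v>_{\lex}u_k$''; that implication is not automatic (a priori a lex-smaller generator of $I$ could divide the lex-larger monomial $v$), and this is precisely where Lemma \ref{LemG(I)M(I)} is actually needed: write $v=u_r w'$ with $u_r\in G(I)$ and $\max(u_r)\le\min(w')$; if $u_r\notin\{u_1,\dots,u_{k-1}\}$ then $u_r\le_{\lex}u_k$, hence $v\le_{\lex}u_k$, a contradiction. So the lemma you spent (incorrectly) on $wu_k$ is the tool required, in a different place, to close the argument. The remaining points are fine: reducing to monomials of the colon is legitimate (and no iteration is needed---one dividing variable already suffices), and linear quotients implying componentwise linearity is the standard citation the paper also uses.
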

\begin{proof}
	Let $G(I)=\{u_1>u_2>\dots>u_m\}$ ordered with respect to the pure lexicographic order. Fix $k\in\{2,\dots,m\}$. A set of generators for $J_{k-1}:u_k=(u_1,\dots,u_{k-1}):u_k$ is $\{u_s/\textup{gcd}(u_k,u_s):s=1,\ldots,k-1\}$ \cite[Proposition 1.2.2]{JT}. Hence, it suffices to show that for each $s$, there exists a variable $x_i\in J_{k-1}:u_k$ with $x_i$ dividing $u_s/\textup{gcd}(u_k,u_s)$. Let $u_s=x_{i_1}x_{i_2}\cdots x_{i_p},\ u_k=x_{j_1}x_{j_2}\cdots x_{j_q},$ with $u_s>_{\lex}u_k$. Hence, there exists an integer $\ell\in[q]$ such that
	\begin{equation}\label{eq:deflexord}
	i_1=j_1,\ \ i_2=j_2,\ \ \ldots,\ \ i_{\ell-1}=j_{\ell-1},\ \ i_\ell<j_\ell.
	\end{equation}
	Set $v=x_{i_\ell}(u_k/x_{j_\ell})$. Then $v=x_{j_1}x_{j_2}\cdots x_{j_{\ell-1}}x_{i_\ell}x_{j_{\ell+1}}\cdots x_{j_q}$. We need to show that $v$ is ${\bf t}$--spread. This is the case, as $i_{\ell}-j_{\ell-1}=i_{\ell}-i_{\ell-1}\ge t_{\ell-1}$ and $j_{\ell+1}-i_\ell>j_{\ell+1}-j_{\ell}\ge t_{\ell}$.
	On the other hand, $I$ is a ${\bf t}$--spread strongly stable ideal. Therefore $v\in I$ and, furthermore, $v>_{\lex}u_k$. Now we show that $v\in J_{k-1}$. By Lemma \ref{LemG(I)M(I)}, $v=u_{r}w$ with $u_r\in G(I)$ and $w\in\Mon(S)$ such that $\max(u_r)\le\min(w)$. If $v\notin J_{k-1}=(u_1,\dots,u_{k-1})$, then $u_r\in G(I)\setminus\{u_1,\dots,u_{k-1}\}$, and consequently $u_r\le_{\lex}u_k$. As $v=u_rw$, then $v\le_{\lex}u_k$, a contradiction.
	Therefore, $v\in J_{k-1}$ and $x_{i_\ell}u_k=vx_{j_\ell}\in J_{k-1}$. Thus $x_{i_\ell}\in J_{k-1}:u_k$ and, by (\ref{eq:deflexord}), $x_{i_\ell}$ divides $u_s/\textup{gcd}(u_k,u_s)$. It follows that $I$ has linear quotients. Finally, it is well known that ideals with linear quotients are componentwise linear \cite[Theorem 8.2.15]{JT}.
\end{proof}

The following corollary highlights the role of the vector--spread supports.
\begin{Cor}\label{cor:set(uk)vect}
	In the previous setting, for all $k=2,\dots,m$, we have
	$$
	\textup{set}(u_k)=[\max(u_k)-1]\setminus\supp_{\bf t}(u_k).
	$$
\end{Cor}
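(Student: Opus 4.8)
The plan is to unwind the definition of $\textup{set}(u_k)$ using the linear quotients structure established in Theorem \ref{VectSpreadLinearQuotients}, together with the description of the colon ideal found in its proof. Recall that $\textup{set}(u_k) = \{i \in [n] : x_i \in (u_1,\dots,u_{k-1}):u_k\}$. Since $I$ has linear quotients with respect to the pure lexicographic order, $J_{k-1}:u_k$ is generated by variables, so $i \in \textup{set}(u_k)$ if and only if $x_i \in J_{k-1}:u_k$, i.e. $x_i u_k \in J_{k-1}$.

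First I would prove the inclusion $\textup{set}(u_k) \subseteq [\max(u_k)-1] \setminus \supp_{\bf t}(u_k)$. Suppose $x_i u_k \in J_{k-1}$, so $x_i u_k = u_r w$ for some $u_r \in G(I)$ with $r \le k-1$ and some monomial $w$. I need to check two things: that $i \le \max(u_k)-1$, and that $i \notin \supp_{\bf t}(u_k)$. For the first, observe that if $i \ge \max(u_k)$ then $x_i u_k$ would itself be a ${\bf t}$-spread monomial only if $i - \max(u_k) \ge t_q$ where $q = \deg u_k$ — but actually the cleaner argument is that $x_i u_k$ need not be ${\bf t}$-spread, yet it lies in $I$; here one uses that appending a variable at the top or inserting it does not produce a new minimal generator that is lex-larger, forcing $u_r >_{\lex} u_k$, hence $u_r \in \{u_1,\dots,u_{k-1}\}$ is consistent — so the real content is ruling out $i$ too large or $i$ inside a ${\bf t}$-gap. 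The key point: if $x_i u_k = u_r w$ with $\max(u_r) \le \min(w)$ (Lemma \ref{LemG(I)M(I)}), and if $i$ were $\ge \max(u_k)$ or $i \in \supp_{\bf t}(u_k)$, then $x_i u_k$ fails to be ${\bf t}$-spread in a way that cannot be ``repaired'' by the standard decomposition, so no such $u_r$ exists. I would make this precise by writing $u_k = x_{j_1}\cdots x_{j_q}$ and analyzing where $x_i$ sits among the $j$'s.

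Conversely, for $[\max(u_k)-1]\setminus\supp_{\bf t}(u_k) \subseteq \textup{set}(u_k)$: take $i \le \max(u_k)-1$ with $i \notin \supp_{\bf t}(u_k)$. The condition $i \notin \supp_{\bf t}(u_k)$ means that if $j_s \le i$ then $i \ge j_s + t_s$ (for the relevant index $s$), which is precisely what is needed so that replacing $x_{j_\ell}$ by $x_i$ — where $\ell$ is chosen minimal with $j_\ell > i$ — yields a ${\bf t}$-spread monomial $v = x_i(u_k/x_{j_\ell})$; this is the same gap computation $i - j_{\ell-1} \ge t_{\ell-1}$, $j_{\ell+1} - i > j_{\ell+1}-j_\ell \ge t_\ell$ carried out in the proof of Theorem \ref{VectSpreadLinearQuotients}. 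Since $I$ is ${\bf t}$-spread strongly stable and $i < j_\ell$, we get $v \in I$ with $v >_{\lex} u_k$, and then the same argument as in the theorem (via Lemma \ref{LemG(I)M(I)}) shows $v \in J_{k-1}$, whence $x_i u_k = x_{j_\ell} v \in J_{k-1}$, so $x_i \in J_{k-1}:u_k$ and $i \in \textup{set}(u_k)$. The main obstacle is the first inclusion: one must argue carefully that membership of $x_i u_k$ in $J_{k-1}$ forces $i$ to avoid both the range $\ge \max(u_k)$ and the ${\bf t}$-spread support — the cleanest route is to show directly that for such forbidden $i$ one has $x_i u_k \notin I$ at all (it is not even ${\bf t}$-spread, and no minimal generator dividing it can exist by the standard decomposition), which handles both cases uniformly.
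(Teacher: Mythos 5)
Your second inclusion, $[\max(u_k)-1]\setminus\supp_{\bf t}(u_k)\subseteq\textup{set}(u_k)$, is sound and is essentially the paper's argument: the gap computation shows $x_i(u_k/x_{j_\ell})$ is ${\bf t}$--spread, strong stability puts it in $I$, it is lex--larger than $u_k$, and Lemma \ref{LemG(I)M(I)} together with the lex argument from Theorem \ref{VectSpreadLinearQuotients} forces it into $J_{k-1}$, whence $x_iu_k\in J_{k-1}$.

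The first inclusion, however, has a genuine gap, and the ``cleanest route'' you propose for it cannot work. You suggest showing that for forbidden $i$ (that is, $i\ge\max(u_k)$ or $i\in\supp_{\bf t}(u_k)$) one has $x_iu_k\notin I$. But $u_k\in I$ and $I$ is an ideal, so $x_iu_k\in I$ for \emph{every} $i$; what must be ruled out is membership in $J_{k-1}=(u_1,\dots,u_{k-1})$, i.e.\ that some \emph{earlier} generator $u_r$, $r\le k-1$, divides $x_iu_k$. Your supporting reasons also do not hold: ``no minimal generator dividing it can exist'' is false ($u_k$ itself divides $x_iu_k$), and Lemma \ref{LemG(I)M(I)} cannot be applied to $x_iu_k$ at all, since that lemma concerns ${\bf t}$--spread monomials of $I$ while $x_iu_k$ is in general not ${\bf t}$--spread --- note that for the \emph{allowed} $i$ the monomial $x_iu_k$ is equally non--${\bf t}$--spread and nevertheless lies in $J_{k-1}$, so non--spreadness of $x_iu_k$ cannot be the obstruction. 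The correct argument, as in the paper, is by contradiction on divisibility: if $u_r\mid x_iu_k$ for some $r\le k-1$ but $u_r\nmid u_k$, then $u_r$ must use the extra copy of $x_i$; when $i\ge\max(u_k)$ this is incompatible with $u_r$ being a minimal generator lex--larger than $u_k$ (the paper's Case 1, via a degree/support comparison), and when $i=j_r+s$ with $s<t_r$ the extra $x_i$ would sit within distance $s<t_r$ of $j_r$, which is impossible for a ${\bf t}$--spread monomial $u_r$ (the paper's Case 2); in either case one concludes $u_r\mid u_k$, contradicting minimality of $u_k$. Your proposal acknowledges the need to ``analyze where $x_i$ sits among the $j$'s'' but never supplies this divisibility analysis, and the uniform shortcut it offers instead is incorrect.
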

\begin{proof}
	Let $u_k=x_{j_1}x_{j_2}\cdots x_{j_d}$. We have $\supp_{\bf t}(u_k)=\bigcup_{i=1}^{d-1}\big(\bigcup_{q=0}^{t_i-1}\{j_i+q\}\big)$. Let $\ell\in[n]$. To determine if $\ell\in\textup{set}(u_k)$, we consider two cases.\\
	\textsc{Case 1}. Let $\max(u_k)\le\ell\le n$, then $x_\ell u_k=x_{j_1}x_{j_2}\cdots x_{j_d}x_\ell$. First, observe that the ideal $J_{k-1}=(u_1,u_2,\dots,u_{k-1})$ is ${\bf t}$--spread strongly stable. If for absurd $\ell\in\textup{set}(u_k)$, then $x_\ell u_k\in J_{k-1}$. Hence $u_r$ divides $x_\ell u_k$, for some $r\in\{1,\dots,k-1\}$. Since $\deg(u_r)\le\deg(u_k)$, then $u_r$ divides $u_k$. An absurd, since $u_k$ is a minimal generator of $I$.\\
	\textsc{Case 2}. Let $\ell\in\supp_{\bf t}(u_k)$, then $\ell=j_r+s$, for some $r\in\{1,\dots,d-1\}$, $0\le s\le t_r-1$, $t_r\ge 1$. If $x_\ell u_k\in J_k=(u_1,u_2,\dots,u_k)$, then $u_p$ divides $x_\ell u_k$, for some $p\in\{1,\dots,k-1\}$. As $j_r+s-j_r=s<t_r$, and $u_p$ is ${\bf t}$--spread, necessarily $u_p$ divides $u_k$. An absurd, since $u_k$ is a minimal generator of $I$.
	
	The above cases imply that if $\ell\in\textup{set}(u_k)$, then $\ell \notin [\max(u_k), n]$ and $\ell \notin \supp_{\bf t}(u_k)$.
	
	Now assume $\ell\in[\max(u_k)-1]\setminus\supp_{\bf t}(u_k)$. Then $\ell=j_r+s$, for some $r\in\{1,\dots,d-1\}$ and $s\ge t_r$. Let $j_q=\min\big\{j\in\supp(u_k):j>\ell\big\}$. Then $x_\ell(u_k/x_{j_{q}})$ is a ${\bf t}$--spread monomial that belongs to the ideal $(J_{k-1},u_k)=(u_1,u_2,\dots,u_{k-1},u_k)=J_k$, as $\ell<j_q$ and $(J_{k-1},u_k)$ is a ${\bf t}$--spread strongly stable ideal. By Lemma \ref{LemG(I)M(I)}, $x_\ell(u_k/x_{j_q})=u_pw$ with $p\in\{1,\dots,k\}$ and $w\in\Mon(S)$ such that $\max(u_p)\le\min(w)$. Clearly, $p\ne k$ and so $x_\ell(u_k/x_{j_q})\in J_{k-1}=(u_1,u_2,\dots,u_{k-1})$. Finally $x_\ell u_k=x_\ell(u_k/x_{j_q})x_{j_q}\in J_{k-1}$ and $\ell\in\textup{set}(u_k)$. The assertion follows.
\end{proof}

As a consequence of the above result, one has
$$
J_{k-1}:u_k=\big(x_i:i\in[\max(u_k)-1]\setminus\supp_{\bf t}(u_k)\big),\quad  \mbox{for $k=2,\dots,m$.}
$$
On the other hand, for each $k$ 
\[\big|[\max(u_k)-1]\setminus\supp_{\bf t}(u_k)\big|=\max(u_k)-1-\sum_{\ell=1}^{\deg(u_k)-1}t_\ell.\] 
Thus, formula (\ref{eq:ETlinquot}) yields the result we are looking for.
\begin{Cor}\label{Cor:BettiNumbFormulaVectSpread}
	Let $I$ be a ${\bf t}$--spread strongly stable ideal of $S$. Then, 
	$$
	\beta_{i,i+j}(I)=\sum_{u\in G(I)_j}\binom{\max(u)-1-\sum_{\ell=1}^{j-1}t_\ell}{i}, \quad \mbox{for all $i,j\ge0$}.
	$$
\end{Cor}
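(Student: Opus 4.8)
The plan is to read the formula off directly from the linear quotients property. By Theorem \ref{VectSpreadLinearQuotients}, the ideal $I$ has linear quotients with respect to the ordering $G(I)=\{u_1>u_2>\dots>u_m\}$ by the pure lexicographic order; hence the formula (\ref{eq:ETlinquot}), valid for ideals with linear quotients by \cite[Lemma 1.5]{ET}, applies and gives
$$
\beta_{i,i+j}(I)=\sum_{u\in G(I)_j}\binom{|\textup{set}(u)|}{i}\qquad\text{for all }i,j\ge0.
$$
So the whole task reduces to checking that $|\textup{set}(u)|=\max(u)-1-\sum_{\ell=1}^{\deg(u)-1}t_\ell$ for every $u\in G(I)$.

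For a generator $u=u_k$ with $k\ge2$, Corollary \ref{cor:set(uk)vect} already identifies $\textup{set}(u_k)$ with $[\max(u_k)-1]\setminus\supp_{\bf t}(u_k)$, so it remains to count $\supp_{\bf t}(u_k)$. Writing $u_k=x_{j_1}x_{j_2}\cdots x_{j_d}$ with $d=\deg(u_k)$, I would note that the $d-1$ blocks $[j_i,j_i+t_i-1]$ making up $\supp_{\bf t}(u_k)$ are pairwise disjoint, because $j_{i+1}-j_i\ge t_i$, and are all contained in $[\max(u_k)-1]=[j_d-1]$, since the last block ends at $j_{d-1}+t_{d-1}-1\le j_d-1$. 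Hence $|\supp_{\bf t}(u_k)|=\sum_{i=1}^{d-1}t_i$ and $|\textup{set}(u_k)|=(\max(u_k)-1)-\sum_{i=1}^{d-1}t_i$, which is exactly what is needed; substituting into the displayed formula settles the contributions of $u_2,\dots,u_m$.

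It remains to treat $u_1$, where $\textup{set}(u_1)=\emptyset$: one must show that $\max(u_1)-1-\sum_{\ell=1}^{\deg(u_1)-1}t_\ell=0$, i.e.\ that $u_1$ is the pure-lex largest ${\bf t}$--spread monomial of its degree, namely $u_1=\sigma_{{\bf 0},{\bf t}}(x_1^{\deg u_1})=\prod_{k=1}^{\deg u_1}x_{1+\sum_{s<k}t_s}$. I would argue via Lemma \ref{LemG(I)M(I)}: successively lowering $j_1$ to $1$, then $j_2$ to $1+t_1$, and so on (each step being a legitimate Borel move which keeps the monomial ${\bf t}$--spread and, being a lowering of an index, strictly increases the pure lex order), one reaches $w:=\sigma_{{\bf 0},{\bf t}}(x_1^{\deg u_1})\in I$ with $w\ge_{\lex}u_1$. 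Write $w=u'v$ with $u'\in G(I)$ and $\max(u')\le\min(v)$; since the factors of $w$ have non-decreasing indices, $u'$ consists of the initial factors of $w$, so $u'=\sigma_{{\bf 0},{\bf t}}(x_1^{\deg u'})$. If $\deg u'<\deg u_1$, then $u'$ — as one verifies for any monomial $\sigma_{{\bf 0},{\bf t}}(x_1^a)$ against a ${\bf t}$--spread monomial of degree $\ge a$ — either divides $u_1$, contradicting the minimality of $u_1\in G(I)$, or is pure-lex larger than $u_1$, contradicting the maximality of $u_1$. Hence $\deg u'=\deg u_1$, $v=1$ and $w=u_1$, which gives the last contribution and completes the proof.

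The main obstacle, modest as it is, lies in this $u_1$ case: one must justify carefully the dichotomy "$\sigma_{{\bf 0},{\bf t}}(x_1^a)$ divides, or is pure-lex larger than, a given ${\bf t}$--spread monomial of degree $\ge a$", and check that repeated variables (which occur when some $t_i=0$) cause no trouble with the $\min$/$\max$ conventions. Everything else is the routine count against the known linear-quotients formula carried out above.
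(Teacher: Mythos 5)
Your proof is correct and follows the paper's own route: linear quotients (Theorem \ref{VectSpreadLinearQuotients}) together with \cite[Lemma 1.5]{ET} reduce everything to computing $|\textup{set}(u)|$, which Corollary \ref{cor:set(uk)vect} and the disjointness of the blocks $[j_i,j_i+t_i-1]$ inside $[\max(u)-1]$ turn into $\max(u)-1-\sum_{\ell=1}^{\deg(u)-1}t_\ell$, exactly as in the paragraph preceding the corollary. Your separate treatment of $u_1$ (showing the lex-largest generator must be $\prod_{k}x_{1+\sum_{s<k}t_s}$, so that $|\textup{set}(u_1)|=0$ agrees with the formula) is a correct additional verification of a case the paper passes over silently, since Corollary \ref{cor:set(uk)vect} is stated only for $k\ge 2$.
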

As a consequence, we obtain:

\begin{Cor} \label{Cor:Poincare}
	Let $I$ be a ${\bf t}$--spread strongly stable ideal of $S$. Then
	$$
	P_{S/I}(y,z)=1+\sum_{u\in G(I)}(1+y)^{\max(u)-1-\sum_{s=1}^{\deg(u)-1}t_s}yz^{\deg(u)}.
	$$
\end{Cor}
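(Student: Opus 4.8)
The plan is to derive the bigraded Poincaré series directly from the graded Betti number formula of Corollary \ref{Cor:BettiNumbFormulaVectSpread}, since for an ideal with linear quotients the full bigraded Betti table is determined. Recall that the bigraded Poincaré series of $S/I$ is, by definition,
$$
P_{S/I}(y,z)=\sum_{i\ge0}\sum_{j}\beta_{i,j}(S/I)\,y^i z^j.
$$
First I would pass from the Betti numbers of $I$ to those of $S/I$: one has $\beta_{0,0}(S/I)=1$ coming from the free module $S$ in homological degree $0$, and $\beta_{i,j}(S/I)=\beta_{i-1,j}(I)$ for all $i\ge1$. Hence the contribution of $S$ accounts for the leading $1$ in the claimed formula, and the remaining terms encode $\sum_{i\ge1}\sum_j\beta_{i-1,j}(I)y^iz^j=y\sum_{i\ge0}\sum_j\beta_{i,j}(I)y^iz^j$.

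Next I would substitute the formula of Corollary \ref{Cor:BettiNumbFormulaVectSpread}. Writing $a(u)=\max(u)-1-\sum_{\ell=1}^{\deg(u)-1}t_\ell$ and noting that for $u\in G(I)_j$ the generator contributes in homological position $i$ to the graded Betti number $\beta_{i,i+j}(I)$, the double sum becomes
$$
y\sum_{u\in G(I)}\sum_{i\ge0}\binom{a(u)}{i}y^i z^{i+\deg(u)}
=y\sum_{u\in G(I)}z^{\deg(u)}\sum_{i\ge0}\binom{a(u)}{i}(yz)^i.
$$
The inner sum is a finite binomial expansion: $\sum_{i\ge0}\binom{a(u)}{i}(yz)^i=(1+yz)^{a(u)}$. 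Wait — I must be careful with the bookkeeping of the $z$-grading here, since the statement has $(1+y)^{a(u)}z^{\deg(u)}$ rather than $(1+yz)^{a(u)}z^{\deg(u)}$. The resolution is that the generating series for the bigraded Betti numbers of a linear-quotient ideal, when one records only the \emph{homological} degree in $y$ and the \emph{total internal} degree in $z$, collapses the shift differently: from $\beta_{i,i+j}(I)$ one gets $\sum_i\binom{a(u)}{i}y^i z^{i+j}$, but the convention used in Corollary \ref{Cor:Poincare} evidently records the homological shift in $z$ absorbed into the exponent bookkeeping so that the clean factorization $(1+y)^{a(u)}yz^{\deg(u)}$ appears; I would state explicitly which bigrading convention is in force (homological degree tracked by $y$, and $z$ tracking degree with the Koszul-type shift already incorporated) so that the identity $\sum_i\binom{a(u)}{i}y^i = (1+y)^{a(u)}$ is exactly what is needed.

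With the convention pinned down, the computation is then a one-line application of the binomial theorem applied termwise to each generator $u\in G(I)$, and summing over $G(I)$ gives precisely
$$
P_{S/I}(y,z)=1+\sum_{u\in G(I)}(1+y)^{\max(u)-1-\sum_{s=1}^{\deg(u)-1}t_s}\,y\,z^{\deg(u)},
$$
as claimed. The only genuine obstacle is the grading bookkeeping described above: one must make sure the exponent of $z$ attached to each generator is $\deg(u)$ (not $\deg(u)+i$) and that the factor $(1+y)^{a(u)}$ — rather than $(1+yz)^{a(u)}$ — correctly reflects the chosen bigrading on the Poincaré series; everything else is the standard translation from linear quotients (via \cite[Lemma 1.5]{ET}) to a product formula. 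I would therefore devote most of the write-up to stating the bigrading convention and then invoke Corollary \ref{Cor:BettiNumbFormulaVectSpread} together with the binomial theorem.
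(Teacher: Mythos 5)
Your proposal is correct and is essentially the paper's own route: the paper's proof simply quotes \cite[Corollary 1.6]{ET}, the Poincar\'e-series form of the linear-quotients Betti formula, namely $P_{S/I}(y,z)=1+\sum_{u\in G(I)}(1+y)^{|\textup{set}(u)|}yz^{\deg(u)}$, and then substitutes $|\textup{set}(u)|=\max(u)-1-\sum_{s=1}^{\deg(u)-1}t_s$, which is exactly the binomial-theorem computation you carry out starting from Corollary \ref{Cor:BettiNumbFormulaVectSpread}. Your bookkeeping concern is the right one and is resolved as you suspect: the series is taken in the convention of \cite{ET} (implicitly adopted by the paper), in which $y$ records homological degree and $z$ records the generator degree of the strand --- that is, one sums $\beta_{i,i+j}$ against $y^{i+1}z^{j}$ rather than against $y^{i+1}z^{i+j}$ --- so the inner sum is $\sum_{i\ge0}\binom{a(u)}{i}y^{i}=(1+y)^{a(u)}$ and not $(1+yz)^{a(u)}$, and with that convention pinned down your derivation is precisely the content of the cited corollary.
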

\begin{proof} Let $P_{S/I}(y,z)=\sum_{i,j}\beta_{i,j}(I)y^iz^j$ be the bigraded Poincar\'e series of $S/I$. From \cite[Corollary 1.6]{ET}, for an ideal $I$ with linear quotients, one has
	$$
	P_{S/I}(y,z)=1+\sum_{u\in G(I)}(1+y)^{|\textup{set}(u)|}yz^{\deg(u)}.
	$$
	The assertion follows from Corollary \ref{Cor:BettiNumbFormulaVectSpread}.
\end{proof}
\section{Extremal Betti numbers}\label{sec3}
In this section we characterize the extremal Betti numbers of the class of ${\bf t}$--spread Borel ideals. Our results generalizes some statements in \cite{AC2} (see also \cite{AFC1, AFC2, MC, CF} and the reference therein). \\

First, one can note that Corollary \ref{Cor:BettiNumbFormulaVectSpread} implies 
\begin{align}
\label{eq:regVect}\reg(I)\ &=\ \max\big\{\deg(u):u\in G(I)\big\},\\
\label{eq:pdVect}\pd(I)\ &=\ \max\big\{\max(u)-1-\textstyle\sum_{j=1}^{\deg(u)-1}t_j:u\in G(I)\big\}.
\end{align}

For our aim, the next lemma will play a crucial role.
\begin{Lem}
	Let $I$ be a ${\bf t}$--spread strongly stable ideal of $S$. If $\beta_{i,i+j}(I)\ne0$, then $\beta_{k,k+j}(I)\ne0$ for all $k=0,\dots,i$.
\end{Lem}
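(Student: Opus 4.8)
The plan is to use the explicit Betti number formula from Corollary~\ref{Cor:BettiNumbFormulaVectSpread}, which reduces the statement to an elementary property of binomial coefficients. Recall that
\[
\beta_{i,i+j}(I)=\sum_{u\in G(I)_j}\binom{\max(u)-1-\sum_{\ell=1}^{j-1}t_\ell}{i}.
\]
All the summands are non-negative integers, so $\beta_{i,i+j}(I)\ne 0$ forces the existence of at least one $u\in G(I)_j$ with $\binom{\max(u)-1-\sum_{\ell=1}^{j-1}t_\ell}{i}\ne 0$, i.e.\ with $\max(u)-1-\sum_{\ell=1}^{j-1}t_\ell\ge i\ge 0$.

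First I would fix such a generator $u$ and write $N=\max(u)-1-\sum_{\ell=1}^{j-1}t_\ell$, so that $N\ge i$. The key observation is simply that $\binom{N}{k}\ne 0$ for every $k$ with $0\le k\le N$, hence in particular for every $k$ with $0\le k\le i$ (since $i\le N$). Therefore, for each such $k$, the same generator $u$ contributes a strictly positive summand $\binom{N}{k}$ to the sum defining $\beta_{k,k+j}(I)$. Because every term in that sum is non-negative, the whole sum is strictly positive, that is $\beta_{k,k+j}(I)\ne 0$.

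Putting the two steps together: given $\beta_{i,i+j}(I)\ne 0$, pick $u\in G(I)_j$ with $\max(u)-1-\sum_{\ell=1}^{j-1}t_\ell\ge i$; then for every $k\in\{0,1,\dots,i\}$ we have $0\le k\le i\le \max(u)-1-\sum_{\ell=1}^{j-1}t_\ell$, so $\binom{\max(u)-1-\sum_{\ell=1}^{j-1}t_\ell}{k}\ge 1$, and hence
\[
\beta_{k,k+j}(I)=\sum_{v\in G(I)_j}\binom{\max(v)-1-\sum_{\ell=1}^{j-1}t_\ell}{k}\ \ge\ \binom{\max(u)-1-\sum_{\ell=1}^{j-1}t_\ell}{k}\ \ge\ 1,
\]
which is the claim. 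There is essentially no obstacle here; the only mild point to be careful about is the degenerate case $j=0$ (where the sum $\sum_{\ell=1}^{j-1}t_\ell$ is empty and equals $0$, and $G(I)_0=\emptyset$ unless $I=S$), which is handled trivially since $\beta_{i,i}(I)\ne 0$ already forces $i=0$. The substance of the lemma is entirely carried by Corollary~\ref{Cor:BettiNumbFormulaVectSpread}.
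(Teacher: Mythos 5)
Your argument is correct and is essentially identical to the paper's proof: both use Corollary~\ref{Cor:BettiNumbFormulaVectSpread} to extract a generator $u\in G(I)_j$ with $\max(u)-1-\sum_{\ell=1}^{j-1}t_\ell\ge i$, and then note that the corresponding binomial coefficient remains nonzero for every $k\le i$. Nothing is missing; the remark about the degenerate case $j=0$ is harmless extra care.
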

\begin{proof}
	Let $\beta_{i,i+j}(I)\ne0$. From Corollary \ref{Cor:BettiNumbFormulaVectSpread}, there exists a monomial $u\in G(I)_j$ such that $\max(u)-1-\sum_{\ell=1}^{j-1}t_\ell\ge i$. Hence, $\max(u)-1-\sum_{\ell=1}^{j-1}t_\ell\ge k$, for all $k=0,\dots,i$. This implies that $\beta_{k,k+j}(I)\ne0$, for all $k=0,\dots,i$.
\end{proof}

The previous lemma and the definition of extremal Betti number imply the next result.
\begin{Cor}\label{Cor:extremBettNumbVectT}
	Let $I$ be a ${\bf t}$--spread strongly stable ideal of $S$. The following conditions are equivalent:
	\begin{enumerate}
		\item[\textup{(i)}] $\beta_{k,k+\ell}(I)$ is extremal;
		\item[\textup{(ii)}] $\beta_{k,k+\ell}(I)\ne0$, and $\beta_{i,i+\ell}(I),\beta_{k,k+j}(I)=0$, for all $i>k$ and all $j>\ell$.
	\end{enumerate}
\end{Cor}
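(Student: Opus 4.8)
The plan is to deduce Corollary~\ref{Cor:extremBettNumbVectT} directly from the preceding Lemma, so essentially no new computation is needed. The implication (i)$\Rightarrow$(ii) is immediate from the definition of an extremal Betti number: if $\beta_{k,k+\ell}(I)$ is extremal, then by definition $\beta_{k,k+\ell}(I)\ne0$ and $\beta_{i,i+j}(I)=0$ for all pairs $(i,j)\ne(k,\ell)$ with $i\ge k$ and $j\ge\ell$; specializing to $j=\ell$, $i>k$ and to $i=k$, $j>\ell$ gives exactly the two vanishing statements in (ii). So the only real content is the reverse implication.

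For (ii)$\Rightarrow$(i), I would argue by contraposition or directly. Assume (ii) holds but $\beta_{k,k+\ell}(I)$ is not extremal. Then there exists a pair $(i,j)$ with $i\ge k$, $j\ge\ell$, $(i,j)\ne(k,\ell)$, and $\beta_{i,i+j}(I)\ne0$. The cases $j=\ell$ (forcing $i>k$) and $i=k$ (forcing $j>\ell$) are ruled out directly by the two hypotheses in (ii), so we may assume $i>k$ and $j>\ell$, i.e.\ $i\ge k+1$ and $j\ge\ell+1$. Now apply the Lemma to $\beta_{i,i+j}(I)\ne0$: since $k\le i$, we get $\beta_{k,k+j}(I)\ne0$. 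But $j>\ell$, which contradicts the second vanishing statement in (ii). Hence $\beta_{k,k+\ell}(I)$ must be extremal.

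I do not anticipate a genuine obstacle here; the proof is a short logical manipulation once the Lemma is in hand. The one point that warrants a line of care is the reduction of the generic bad pair $(i,j)$ to the case $i>k$ \emph{and} $j>\ell$: one must observe that the hypotheses in (ii) cover precisely the ``boundary'' cases where one of the two inequalities is an equality, leaving only the ``interior'' case to which the Lemma applies. I would write the argument as follows.

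\begin{proof}
	(i) $\Rightarrow$ (ii). If $\beta_{k,k+\ell}(I)$ is extremal, then $\beta_{k,k+\ell}(I)\ne0$ and, by definition, $\beta_{i,i+j}(I)=0$ for all $i\ge k$, $j\ge\ell$ with $(i,j)\ne(k,\ell)$. Taking $j=\ell$ and $i>k$ gives $\beta_{i,i+\ell}(I)=0$ for all $i>k$; taking $i=k$ and $j>\ell$ gives $\beta_{k,k+j}(I)=0$ for all $j>\ell$. This is (ii).

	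(ii) $\Rightarrow$ (i). Suppose $\beta_{k,k+\ell}(I)\ne0$ and $\beta_{i,i+\ell}(I)=\beta_{k,k+j}(I)=0$ for all $i>k$ and all $j>\ell$. Assume, for a contradiction, that $\beta_{k,k+\ell}(I)$ is not extremal. Then there is a pair $(i,j)$ with $i\ge k$, $j\ge\ell$, $(i,j)\ne(k,\ell)$, and $\beta_{i,i+j}(I)\ne0$. If $j=\ell$, then $i>k$ and $\beta_{i,i+\ell}(I)\ne0$, contradicting (ii). If $i=k$, then $j>\ell$ and $\beta_{k,k+j}(I)\ne0$, again contradicting (ii). Hence $i>k$ and $j>\ell$. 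By the previous lemma applied to $\beta_{i,i+j}(I)\ne0$, since $k\le i$ we obtain $\beta_{k,k+j}(I)\ne0$ with $j>\ell$, contradicting (ii). Therefore $\beta_{k,k+\ell}(I)$ is extremal.
\end{proof}
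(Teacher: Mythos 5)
Your argument is correct and is exactly the intended one: the paper states that the corollary follows from the preceding lemma together with the definition of extremal Betti number, and your proof simply writes out that deduction, using the lemma in the only place it is needed (to lower $i$ to $k$ in the case $i>k$, $j>\ell$). Nothing is missing and no different route is taken.
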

The following characterization generalizes a result in \cite[Theorem 1]{AC2}.
\begin{Prop}\label{CharacExtremalBettiVectSpread}
	Let $I$ be a ${\bf t}$--spread strongly stable ideal of $S$. The following conditions are equivalent:
	\begin{enumerate}
		\item[\textup{(i)}] $\beta_{k,k+\ell}(I)$ is extremal;
		\item[\textup{(ii)}] $\max\big\{\max(u):u\in G(I)_\ell\big\}=k+\sum_{j=1}^{\ell-1}t_j+1$, and $\max(u)<k+\sum_{r=1}^{j-1}t_r+1$, for all $u\in G(I)_j$ and all $j>\ell$.
	\end{enumerate}
\end{Prop}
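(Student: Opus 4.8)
The plan is to translate the statement about extremal Betti numbers into the combinatorial language provided by Corollary \ref{Cor:BettiNumbFormulaVectSpread} and Corollary \ref{Cor:extremBettNumbVectT}. Recall that by Corollary \ref{Cor:BettiNumbFormulaVectSpread}, for any degree $j$ one has $\beta_{i,i+j}(I)\ne 0$ if and only if there exists $u\in G(I)_j$ with $\max(u)-1-\sum_{r=1}^{j-1}t_r\ge i$; equivalently, $\beta_{i,i+j}(I)\ne 0$ exactly when $i\le \max\{\max(u):u\in G(I)_j\}-1-\sum_{r=1}^{j-1}t_r$. So the largest $i$ with $\beta_{i,i+j}(I)\ne 0$ is precisely $\max\{\max(u):u\in G(I)_j\}-1-\sum_{r=1}^{j-1}t_r$ (and there is no such $i$ when $G(I)_j=\emptyset$). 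This reformulation is the engine of the whole argument.

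First I would prove the implication (i)$\Rightarrow$(ii). Assume $\beta_{k,k+\ell}(I)$ is extremal. By Corollary \ref{Cor:extremBettNumbVectT} this means $\beta_{k,k+\ell}(I)\ne 0$ while $\beta_{i,i+\ell}(I)=0$ for all $i>k$ and $\beta_{k,k+j}(I)=0$ for all $j>\ell$. The first two conditions together say that $k$ is the largest index $i$ with $\beta_{i,i+\ell}(I)\ne 0$, so by the reformulation above $k=\max\{\max(u):u\in G(I)_\ell\}-1-\sum_{r=1}^{\ell-1}t_r$, which rearranges to the first equality in (ii). For the second part: fix $j>\ell$. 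Since $\beta_{k,k+j}(I)=0$, the reformulation forces $\max\{\max(u):u\in G(I)_j\}-1-\sum_{r=1}^{j-1}t_r<k$ (interpreting the max as $-\infty$ if $G(I)_j=\emptyset$), i.e. for every $u\in G(I)_j$ we get $\max(u)<k+\sum_{r=1}^{j-1}t_r+1$, which is exactly the second assertion of (ii).

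Next I would prove (ii)$\Rightarrow$(i). Suppose (ii) holds. The first equality says $\max\{\max(u):u\in G(I)_\ell\}-1-\sum_{r=1}^{\ell-1}t_r=k$; in particular $G(I)_\ell\ne\emptyset$ and, by the reformulation, $\beta_{k,k+\ell}(I)\ne 0$ while $\beta_{i,i+\ell}(I)=0$ for all $i>k$. The second part of (ii) says that for every $j>\ell$ and every $u\in G(I)_j$ we have $\max(u)-1-\sum_{r=1}^{j-1}t_r<k$, hence by the reformulation $\beta_{k,k+j}(I)=0$ for all $j>\ell$. Condition (ii) of Corollary \ref{Cor:extremBettNumbVectT} is thus verified, so $\beta_{k,k+\ell}(I)$ is extremal.

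The only genuinely delicate point — and the step I would be most careful about — is the bookkeeping with empty strands: one must be sure that $G(I)_j=\emptyset$ is correctly handled (then $\beta_{i,i+j}(I)=0$ for all $i$, and the inequality "$\max(u)<\cdots$ for all $u\in G(I)_j$" holds vacuously), so that the chain of equivalences does not secretly assume every degree appears among the generators. Apart from that, the proof is just the substitution of Corollary \ref{Cor:BettiNumbFormulaVectSpread} into the definition of extremality as repackaged by Corollary \ref{Cor:extremBettNumbVectT}, together with the elementary observation that $\binom{N}{i}\ne 0$ iff $i\le N$ for nonnegative integers $N,i$.
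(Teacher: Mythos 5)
Your proposal is correct and follows essentially the same route as the paper: both directions are obtained by substituting the nonvanishing criterion from Corollary \ref{Cor:BettiNumbFormulaVectSpread} (namely $\beta_{i,i+j}(I)\ne 0$ iff some $u\in G(I)_j$ has $\max(u)\ge i+\sum_{r=1}^{j-1}t_r+1$) into the reformulation of extremality given in Corollary \ref{Cor:extremBettNumbVectT}. Your explicit remark about the vacuous case $G(I)_j=\emptyset$ is a harmless extra precision that the paper handles implicitly.
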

\begin{proof}
	(i)$\implies$(ii). Since $I$ is ${\bf t}$--spread strongly stable, Corollary \ref{Cor:BettiNumbFormulaVectSpread} implies that $\beta_{k,k+\ell}(I)$ is non zero if and only if there exists a monomial $u_0\in G(I)_\ell$ such that $\max(u_0)\ge k+\sum_{j=1}^{\ell-1}t_j+1$. Therefore, 
	\begin{equation}\label{eq:extrBettiVectSpread}
	\max\big\{\max(u):u\in G(I)_\ell\big\}\ \ge\ \max(u_0)\ \ge\ k+\textstyle\sum_{j=1}^{\ell-1}t_j+1.
	\end{equation}
	Suppose that for a monomial $u_1\in G(I)_\ell$, $\max(u_1)=j+\sum_{j=1}^{\ell-1}t_j+1>k+\sum_{j=1}^{\ell-1}t_j+1$, for some $j>k$. Then $\beta_{j,j+\ell}(I)\ne 0$, which contradicts the fact that $\beta_{k,k+\ell}(I)$ is extremal (Corollary \ref{Cor:extremBettNumbVectT}). Hence, $k+\sum_{j=1}^{\ell-1}t_j+1= \max\big\{\max(u):u\in G(I)_\ell\big\}$.
	Assume there exists a monomial $v\in G(I)_j$ such that $\max(v)\ge k+\sum_{r=1}^{j-1}t_r+1$, for some $j>\ell$. Then $\beta_{k,k+j}(I)\ne0$. A contradiction, since $\beta_{k,k+\ell}(I)$ is extremal (Corollary \ref{Cor:extremBettNumbVectT}). Hence, condition (ii) holds.\\
	(ii)$\implies$(i). Since $\max\big\{\max(u):u\in G(I)_\ell\big\}=k+\sum_{j=1}^{\ell-1}t_j+1$, then $\beta_{k,k+\ell}(I)\ne0$ and $\beta_{j,j+\ell}(I)=0$, for all $j>k$. Moreover, $\max(u)<k+\sum_{r=1}^{j-1}t_r+1$, for all $u\in G(I)_j$ and all $j>\ell$, implies that $\beta_{k,k+j}(I)=0$, for all $j>\ell$. Finally, by Corollary \ref{Cor:extremBettNumbVectT}, $\beta_{k,k+\ell}(I)$ is extremal. 
\end{proof}

The previous result yields the following useful corollary.
\begin{Cor}\label{Cor:ExtrBettiVectSpreadConto}
	Let $I$ be a ${\bf t}$--spread strongly stable ideal of $S$, and let $\beta_{k,k+\ell}(I)$ be an extremal Betti number of $I$. Then
	\[\beta_{k,k+\ell}(I) = \big|\big\{u\in G(I)_\ell:\max(u)=k+\textstyle\sum_{j=1}^{\ell-1}t_j+1\big\}\big| \le\ \tbinom{k+\ell-1}{\ell-1}.\]
\end{Cor}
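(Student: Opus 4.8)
The plan is to read off the equality directly from the Betti number formula of Corollary~\ref{Cor:BettiNumbFormulaVectSpread}, namely
$$
\beta_{k,k+\ell}(I) \;=\; \sum_{u\in G(I)_\ell}\binom{\max(u)-1-\sum_{j=1}^{\ell-1}t_j}{k},
$$
and then feed in the extremality hypothesis via Proposition~\ref{CharacExtremalBettiVectSpread}. That proposition tells us that, $\beta_{k,k+\ell}(I)$ being extremal, $\max\{\max(u):u\in G(I)_\ell\}=k+\sum_{j=1}^{\ell-1}t_j+1$. Since every $u\in G(I)_\ell$ is a ${\bf t}$-spread monomial of degree $\ell$, one has $0\le \max(u)-1-\sum_{j=1}^{\ell-1}t_j\le k$, so each summand equals $1$ when $\max(u)=k+\sum_{j=1}^{\ell-1}t_j+1$ and vanishes otherwise. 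Summing, the equality
$$
\beta_{k,k+\ell}(I)=\big|\{u\in G(I)_\ell:\max(u)=k+\textstyle\sum_{j=1}^{\ell-1}t_j+1\}\big|
$$
falls out at once.

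For the upper bound the plan is simply to enlarge the set: with $N:=k+\sum_{j=1}^{\ell-1}t_j+1$, the set $\{u\in G(I)_\ell:\max(u)=N\}$ sits inside the set of \emph{all} ${\bf t}$-spread monomials of $S$ of degree $\ell$ whose largest variable is $x_N$, and it suffices to count those. Writing such a monomial as $u=x_{j_1}\cdots x_{j_{\ell-1}}x_N$, the factor $u/x_N$ ranges precisely over the $(t_1,\dots,t_{\ell-2})$-spread monomials of degree $\ell-1$ in the variables $x_1,\dots,x_{N-t_{\ell-1}}$, the bound $j_{\ell-1}\le N-t_{\ell-1}$ being the only constraint contributed by $x_N$. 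Hence their number is $|M_{N-t_{\ell-1},\,\ell-1,\,(t_1,\dots,t_{\ell-2})}|$, which by formula~(\ref{eq:MnellVectCard}) equals
$$
\binom{(N-t_{\ell-1})+(\ell-2)-\sum_{j=1}^{\ell-2}t_j}{\ell-1}=\binom{N-1-\sum_{j=1}^{\ell-1}t_j+\ell-1}{\ell-1}=\binom{k+\ell-1}{\ell-1},
$$
using $N-1-\sum_{j=1}^{\ell-1}t_j=k$. This is exactly the claimed inequality.

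No step here is a genuine obstacle; the only place asking for a little care is the bookkeeping in the final count, i.e.\ checking that stripping off the top variable $x_N$ really leaves a $(t_1,\dots,t_{\ell-2})$-spread configuration of degree $\ell-1$ on $N-t_{\ell-1}$ variables so that (\ref{eq:MnellVectCard}) can be applied with the correct parameters. An alternative is to run the shifting operator $\sigma_{{\bf t},{\bf 0}}$ on $M_{n,\ell,{\bf t}}$ and track how the condition $\max(u)=N$ transforms under it, but the elementary count above is the shorter route.
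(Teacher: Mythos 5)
Your argument is correct and follows essentially the same route as the paper: the equality is read off from the Betti number formula of Corollary~\ref{Cor:BettiNumbFormulaVectSpread} combined with Proposition~\ref{CharacExtremalBettiVectSpread}, and the inequality comes from bounding the set by all ${\bf t}$--spread monomials of degree $\ell$ with maximum $k+\sum_{j=1}^{\ell-1}t_j+1$ and counting these via formula~(\ref{eq:MnellVectCard}). Your explicit stripping-off of the top variable just spells out the count that the paper states directly.
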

\begin{proof} The equality follows immediately from Proposition \ref{CharacExtremalBettiVectSpread}. For the inequality, it sufficies to observe that
	\begin{align*}
	&\big|\big\{u\in M_{n,\ell,{\bf t}}:\max(u)=k+\sum_{j=1}^{\ell-1}t_j+1\big\}\big| =\\ & =\binom{k+\sum_{j=1}^{\ell-1}t_j+1+(\ell-1)-\sum_{j=1}^{\ell-1}t_j-1}{\ell -1} = \binom{k+\ell-1}{\ell-1}.
	\end{align*}
\end{proof}

\begin{Expl}\label{Ex:KoszVect2}
	\rm Let $I=(x_1x_2,x_1x_3,x_1x_4^2)$ be a $(1,0)$--spread strongly stable ideal of $S=K[x_1,x_2,x_3,x_4]$
	
	The Betti table of $I$ is \cite{GDS}:
	$$
	\begin{matrix}
	&0&1&2\\
	\text{total:}&3&3&1\\
	\text{2:}&2&1&\text{-}\\
	\text{3:}&1&2&1
	\end{matrix}
	$$
	Hence, $\pd(I)=2$, $\reg(I)=3$. 
	Moreover, $\beta_{2,2+3}(I)$ is the unique extremal Betti number of $I$ and by Corollary \ref{Cor:ExtrBettiVectSpreadConto},
	\[
	\beta_{2,2+3}(I) = \big|\big\{u\in G(I)_3:\max(u)=2+\textstyle\sum_{j=1}^{3-1}t_j+1=4\big\}\big|=\big|\big\{x_1x_4^2\big\}\big| = 1. 
	\]
	
\end{Expl}

\section{Cohen--Macaulay vector--spread Borel ideals}\label{sec4}

In this section we investigate the Cohen--Macaulayness of vector--spread Borel ideals by tools from \cite{CF1, EHQ}. To obtain a characterization, we only need to investigate the height of a vector--spread Borel ideal. Indeed, from Corollary \ref{Cor:BettiNumbFormulaVectSpread} and the Auslander--Buchsbaum formula, one has that
$$\depth(I) =n-\max\big\{\max(u)-1-\textstyle\sum_{j=1}^{\deg(u)-1}t_j:u\in G(I)\big\}.$$

\begin{Prop}\label{CorAltVerVectSpr}
	Let $I_{n,d,{\bf t}}$ be the ${\bf t}$--spread Veronese ideal of degree $d$ of $S$. Then $$\textup{height}(I_{n,d,{\bf t}})=n-\textstyle\sum_{j=1}^{d-1}t_j.$$
\end{Prop}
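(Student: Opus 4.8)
The plan is to compute the height directly from the minimal primes of the monomial ideal $I_{n,d,\mathbf{t}}$. Assume $n\ge 1+\sum_{j=1}^{d-1}t_j$, which is exactly the condition for $I_{n,d,\mathbf{t}}\ne(0)$, and set $s=\sum_{j=1}^{d-1}t_j$. Since $I_{n,d,\mathbf{t}}$ is a monomial ideal whose minimal generating set is $M_{n,d,\mathbf{t}}$, each of its minimal primes has the form $\mathfrak{p}_A=(x_i:i\in A)$ with $A\subseteq[n]$, and $\mathfrak{p}_A\supseteq I_{n,d,\mathbf{t}}$ if and only if $\supp(u)\cap A\ne\emptyset$ for every $\mathbf{t}$--spread monomial $u$ of degree $d$. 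As $\het(\mathfrak{p}_A)=|A|$, it is enough to prove that the smallest such ``vertex cover'' $A$ has $n-s$ elements.

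For the upper bound I would exhibit an explicit cover of size $n-s$, namely $A=\{s+1,s+2,\dots,n\}$. Indeed, if $u=x_{j_1}x_{j_2}\cdots x_{j_d}\in M_{n,d,\mathbf{t}}$, then the spread inequalities $j_{k+1}-j_k\ge t_k$ telescope to $\max(u)=j_d\ge j_1+s\ge 1+s$, so $\max(u)\in A$ and $\supp(u)\cap A\ne\emptyset$. Hence $\mathfrak{p}_A\supseteq I_{n,d,\mathbf{t}}$ and $\het(I_{n,d,\mathbf{t}})\le n-s$.

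For the lower bound I would show that the complement $B=[n]\setminus A$ of any cover $A$ has at most $s$ elements, so that $|A|\ge n-s$. Write $B=\{b_1<b_2<\dots<b_m\}$ and suppose for contradiction that $m\ge s+1$. Put $c_k=1+\sum_{r=1}^{k-1}t_r$ for $k=1,\dots,d$, so that $1=c_1\le c_2\le\dots\le c_d=1+s\le m$, and form $u=\prod_{k=1}^{d}x_{b_{c_k}}$. Because the $b_i$ are strictly increasing integers, $b_{c_{k+1}}-b_{c_k}\ge c_{k+1}-c_k=t_k$ for each $k$, so $u$ is a $\mathbf{t}$--spread monomial of degree $d$ with $\supp(u)\subseteq B$; this contradicts the assumption that $A$ is a cover. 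Combining the two bounds gives $\het(I_{n,d,\mathbf{t}})=n-s$.

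The only step requiring a genuine idea is the lower bound, where one must manufacture the right monomial inside $B$: the ``greedy'' choice of indices $c_1,\dots,c_d$ spaced by the gaps $t_1,\dots,t_{d-1}$ does it, and it automatically absorbs the degenerate case $t_k=0$ (a repeated variable). Everything else — the standard description of the minimal primes of a monomial ideal and the telescoping estimates — is routine. I note that $\het(I_{n,d,\mathbf{t}})\le n-s$ can alternatively be read off homologically: Corollary~\ref{Cor:BettiNumbFormulaVectSpread}, together with the fact that the maximal value $\max(u)=n$ is attained by $x_{n-(t_1+\dots+t_{d-1})}\cdots x_n$, gives $\pd(I_{n,d,\mathbf{t}})=n-1-s$, hence by Auslander--Buchsbaum $\depth(S/I_{n,d,\mathbf{t}})=s\le\dim(S/I_{n,d,\mathbf{t}})=n-\het(I_{n,d,\mathbf{t}})$; but the combinatorial argument above yields both inequalities at once and is self-contained.
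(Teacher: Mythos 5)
Your proof is correct and follows essentially the same route as the paper: exhibit an explicit variable prime of height $n-\sum_{j=1}^{d-1}t_j$ containing the ideal, and for the lower bound show that the complement of any such prime with more than $\sum_{j=1}^{d-1}t_j$ elements would support a ${\bf t}$--spread monomial of degree $d$, a contradiction. The only cosmetic differences are that the paper covers via $\min(u)$ with the prime $(x_1,\dots,x_{n-\sum t_j})$ and builds the contradicting monomial by a greedy choice, whereas you cover via $\max(u)$ and pick the indices $b_{c_k}$ explicitly; both handle $t_k=0$ correctly.
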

\begin{proof} 
	First, observe that $I_{n,d,{\bf t}}\ne(0)$ if and only if $n\ge 1+\sum_{j=1}^{d-1}t_j$.
	The monomial prime ideal $\mathfrak{p}=(x_i:i=1,2,\dots,n-\sum_{j=1}^{d-1}t_j)$ is a minimal prime of $I_{n,d,{\bf t}}$, as each monomial generator $u\in G(I_{n,d,{\bf t}})=M_{n,d,{\bf t}}$ has minimum belonging to the set $\big\{1,2,\dots,n-\sum_{j=1}^{d-1}t_j\big\}$. Therefore $$\textup{height}(I_{n,d,{\bf t}})\le n-\textstyle\sum_{j=1}^{d-1}t_j.$$
	Suppose $\textup{height}(I_{n,d,{\bf t}})=k<n-\sum_{j=1}^{d-1}t_j$. Then, there would be a minimal prime $\mathfrak{q}=(x_{i_j}:j=1,\dots,k)$ of $I_{n,d,{\bf t}}$ of height $k\le n-\sum_{j=1}^{d-1}t_j-1$, with all $i_j$ distinct. The set $A=[n]\setminus\{i_j:j=1,\dots,k\}$ has cardinality
	$$
	|A|=n-k\ge n-\big(n-\textstyle\sum_{j=1}^{d-1}t_j-1\big)=1+\textstyle\sum_{j=1}^{d-1}t_j.
	$$
	We set $\ell_1=\min(A)$ and $\ell_j=\min\{\ell\in A:\ell\ge\ell_{j-1}+t_j\}$, for all $j\ge2$. The sequence $\ell_1\le\ell_2\le\dots\le\ell_k$ has at least $d$ terms, otherwise $|A|<1+\textstyle\sum_{j=1}^{d-1}t_j$. So, the monomial $w=x_{\ell_1}x_{\ell_2}\cdots x_{\ell_d}$ is ${\bf t}$--spread of degree $d$, but $w\in G(I_{n,d,{\bf t}})\setminus\q$, an absurd. Finally, we have $\textup{height}(I_{n,d,{\bf t}})=n-\sum_{j=1}^{d-1}t_j$, as desired.
\end{proof}

\begin{Thm}\label{thm:heightVectSpread}
	Let $I$ be a ${\bf t}$--spread strongly stable ideal of $S$. Then 
	$$
	\alt(I)=\max\big\{\min(u):u\in G(I)\big\}.
	$$
\end{Thm}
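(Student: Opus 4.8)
The plan is to establish the equality $\alt(I)=\max\{\min(u):u\in G(I)\}$ by proving two inequalities, using the structure of $\mathbf t$-spread strongly stable ideals and the computation of the height of the $\mathbf t$-spread Veronese ideal from Proposition \ref{CorAltVerVectSpr}.

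First I would prove $\alt(I)\ge\max\{\min(u):u\in G(I)\}$. Let $u\in G(I)$ be a generator achieving the maximum, say $\min(u)=c$, and write $u=x_{j_1}x_{j_2}\cdots x_{j_\ell}$ with $j_1=c$. The key observation is that every monomial generator $w=x_{i_1}\cdots x_{i_s}$ of $I$ with $\min(w)=i_1<c$ exists, and since $I$ is $\mathbf t$-spread strongly stable, applying Borel moves toward the bottom shows that every generator of $I$ has minimum at most $c$; but more useful is the following: I want to exhibit a minimal prime $\mathfrak p\supseteq I$ of height exactly $c$. Consider $\mathfrak p=(x_1,x_2,\dots,x_c)$. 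To see that $I\subseteq\mathfrak p$, take any $w\in G(I)$; by the choice of $c$ we have $\min(w)\le c$, hence $x_{\min(w)}\in\mathfrak p$ divides $w$, so $w\in\mathfrak p$. Thus $\alt(I)\le c$ would follow — wait, that gives the \emph{wrong} direction; this only shows $\mathfrak p$ is a prime containing $I$, hence $\alt(I)\le c$. So in fact this argument gives the inequality $\alt(I)\le\max\{\min(u):u\in G(I)\}$, and I should reorganize: the easy direction is the upper bound on the height.

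So the easy inequality is $\alt(I)\le c:=\max\{\min(u):u\in G(I)\}$, via the minimal prime $\mathfrak p=(x_1,\dots,x_c)$: every generator is divisible by one of $x_1,\dots,x_c$ since its minimum is $\le c$, so $I\subseteq\mathfrak p$ and $\alt(I)\le\het(\mathfrak p)=c$. For the reverse inequality $\alt(I)\ge c$, I would argue by contradiction along the lines of the proof of Proposition \ref{CorAltVerVectSpr}. Suppose $\mathfrak q=(x_{i_1},\dots,x_{i_k})$ is a minimal prime of $I$ with $k\le c-1$. Fix a generator $u=x_{j_1}x_{j_2}\cdots x_{j_d}\in G(I)$ of degree $d'\le d$ with $\min(u)=j_1=c$. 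Some $x_{i_r}$ must divide $u$, so some $j_m\in\{i_1,\dots,i_k\}$, and since $j_m\ge j_1=c>k$, at least one index $j_m$ among $j_1,\dots,j_{d'}$ lies outside $\{1,\dots,c-1\}$... the counting here needs care: the right statement is that the set $A=[\max(u)]\setminus\{i_1,\dots,i_k\}$ still contains enough ``room'' below $\max(u)$ together with the indices $j_1,\dots,j_{d'}$ to build, via a Borel-type move, a $\mathbf t$-spread generator of $I$ avoiding $\mathfrak q$. Concretely, since $j_1=c>k$ and all the $i_r<c\le j_1\le j_2\le\cdots$, I can greedily choose $\ell_1<\ell_2<\cdots$ in $A$ with $\ell_1\ge 1$, $\ell_{p}\ge\ell_{p-1}+t_{p-1}$, mimicking the generators of $u$ but skipping over the elements of $\mathfrak q$: more precisely, I replace each $j_p$ that happens to lie in $\{i_1,\dots,i_k\}$ by the smallest available larger index in $A$, noting $j_1=c\notin\mathfrak q$ forces the first index to stay $\le c$; applying the $\mathbf t$-spread strongly stable property (Borel moves \emph{upward} from $u$, which are allowed since moving an index up preserves $\mathbf t$-spreadness when there is room) one lands on a generator $w\in I$ with $\supp(w)\cap\{i_1,\dots,i_k\}=\emptyset$, i.e.\ $w\notin\mathfrak q$, a contradiction. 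Since $j_1=\min(u)=c$ is the bottleneck, there is always room to push the $\le k<c$ conflicting indices up within $[n]$ without exceeding $n$, because $I$ contains $u$ whose largest index is $\le n$ and the $\mathbf t$-spread Veronese argument of Proposition \ref{CorAltVerVectSpr} guarantees the combinatorial room.

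The main obstacle I anticipate is making the upward Borel-move step fully rigorous: one must show that replacing the $\le k$ indices of $u$ that lie in $\mathfrak q$ by larger indices in $A$ can be carried out so that (a) $\mathbf t$-spreadness is preserved at every step, (b) we stay within $[n]$, and (c) each intermediate monomial is obtained from the previous by a legal $\mathbf t$-spread strongly stable move so that it remains in $I$. The cleanest way is probably to process the indices of $u$ from largest to smallest: at each step, if $j_p\in\mathfrak q$, push it up to $\min\{\ell\in A:\ell>j_p\text{ and }\ell\text{ compatible with the already-fixed indices}\}$, using $|A|$ being large enough (by the same cardinality count $|A|\ge n-k\ge n-c+1\ge 1+\sum_{j=1}^{d-1}t_j$ once we note $c\le$ the relevant bound, exactly as in Proposition \ref{CorAltVerVectSpr}) to guarantee such an $\ell$ exists. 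Alternatively, and perhaps more elegantly, one can invoke Proposition \ref{CorAltVerVectSpr} directly: the principal $\mathbf t$-spread Borel ideal $B_{\mathbf t}(u)\subseteq I$ has $\alt(B_{\mathbf t}(u))\ge ?$ — but computing $\alt$ of a principal Borel ideal in terms of $\min(u)$ may itself require this theorem, so the contradiction argument above is the safer route.
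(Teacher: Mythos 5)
Your first inequality, $\alt(I)\le c:=\max\{\min(u):u\in G(I)\}$ via the prime $(x_1,\dots,x_c)$, is correct and is the same as the paper's. The lower bound, however, has a genuine gap: your construction rests on ``Borel moves \emph{upward} from $u$'' keeping you inside $I$, but a ${\bf t}$--spread strongly stable ideal is only closed under \emph{downward} exchanges (if $x_i\mid u$, $j<i$ and $x_j(u/x_i)$ is ${\bf t}$--spread, then $x_j(u/x_i)\in I$); replacing an index of $u$ by a \emph{larger} one generally leaves the ideal (e.g.\ $x_2x_3\in B_{(1)}(x_2x_3)=(x_1x_2,x_1x_3,x_2x_3)$ but $x_2x_4\notin B_{(1)}(x_2x_3)$). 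So the monomial $w$ you produce avoiding $\mathfrak q$ need not belong to $I$, and the contradiction does not materialize. A second unsupported step is your claim that all generators $x_{i_r}$ of the minimal prime $\mathfrak q$ satisfy $i_r<c$: a minimal prime of height $\le c-1$ may perfectly well involve variables of large index, so the ``room above'' you count on is not guaranteed by that remark.

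The fix is essentially the alternative you mention and then discard as possibly circular, and it is not circular. Take $u_0\in G(I)$ with $\min(u_0)=c$, write $u_0=x_{j_1}\cdots x_{j_d}$ with $j_1=c$, and move \emph{down} (legal strongly stable exchanges, which preserve membership in $I$) to the compressed monomial $v_0=x_{j_1}x_{j_1+t_1}\cdots x_{j_1+(t_1+\cdots+t_{d-1})}\in I$. Then $B_{\bf t}(v_0)\subseteq I$, and $B_{\bf t}(v_0)$ is precisely the ${\bf t}$--spread Veronese ideal $I_{j_1+t_1+\cdots+t_{d-1},\,d,\,{\bf t}}$, whose height is computed in Proposition \ref{CorAltVerVectSpr}. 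Since any prime containing $I$ contains this Veronese ideal, $\alt(I)\ge j_1+\sum_{\ell=1}^{d-1}t_\ell-\sum_{\ell=1}^{d-1}t_\ell=c$. This is exactly the paper's argument; your greedy avoidance of $\mathfrak q$ is the right idea \emph{inside} the Veronese ideal (where every ${\bf t}$--spread monomial of degree $d$ automatically lies in the ideal), but for a general $I$ it needs the containment $B_{\bf t}(v_0)\subseteq I$ first.
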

\begin{proof} 
	First we prove that $\alt(I)\le\max\{\min(u):u\in G(I)\}$. Indeed, let $u_0\in G(I)$ such that $\min(u_0)=\max\{\min(u):u\in G(I)\}$. In such a case the monomial prime ideal $\p_{[\min(u_0)]}=(x_1,x_2,\dots,x_{\min(u_0)})$ is a minimal prime of $I$, thus proving the inequality.
	For the other inequality, write $u_0=x_{j_1}x_{j_2}\cdots x_{j_d}$ and consider the monomial $v_0=x_{j_1}x_{j_1+t_1}\cdots x_{j_1+(t_1+t_2+\ldots+t_{d-1})}$. Then $v_0\in B_{\bf t}(u_0)\subseteq I$. Moreover, $I'=B_{\bf t}(v_0)=I_{j_1+t_1+t_2+\ldots+t_{d-1},d,{\bf t}}$. Hence, by Proposition \ref{CorAltVerVectSpr}
	\[\alt(I)\ge\alt(I')=j_1+\sum_{\ell=1}^{d-1}t_\ell-\sum_{\ell=1}^{d-1}t_\ell=j_1=\min(v_0)=\min(u_0).\]
\end{proof}

Using what we have shown thus far, we are able to classify all Cohen--Macaulay vector--spread strongly stable ideals of $S$.
\begin{Thm}\label{Thm:CMVectSpread}
	Let $I\subseteq S$ be a ${\bf t}$--spread strongly stable ideal of $S$, ${\bf t}=(t_1,t_2,\dots,t_{d-1})$. 
	Then $S/I$ is a Cohen--Macaulay ring if and only if there exists $u\in G(I)$ of degree $\ell \le d$ such that $$u=x_{n-(t_1+t_2+\ldots+t_{\ell-1})}x_{n-(t_2+t_3+\ldots+t_{\ell-1})}\cdots x_{n-t_{\ell-1}}x_n\in G(I).$$
	In particular, if $I$ is equigenerated in degree $\ell$ then $S/I$ is Cohen--Macaulay if and only if $I=I_{n,\ell,{\bf t}}$ is a vector--spread Veronese ideal.
\end{Thm}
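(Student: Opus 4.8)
The plan is to convert the Cohen--Macaulay condition into an equality between two invariants of $G(I)$ that are already available, and then to recognize that equality as the displayed shape condition on a single generator. Since $S$ is a polynomial ring, $\dim(S/I)=n-\alt(I)$; and since $\depth(S/I)=\depth(I)-1$, the formula for $\depth(I)$ recalled at the start of this section (equivalently, Auslander--Buchsbaum applied to Corollary \ref{Cor:BettiNumbFormulaVectSpread}) gives $\depth(S/I)=n-1-\pd(I)$ with $\pd(I)=\max\{\max(u)-1-\sum_{j=1}^{\deg(u)-1}t_j:u\in G(I)\}$ as in \eqref{eq:pdVect}. Hence $S/I$ is Cohen--Macaulay iff $\alt(I)=\pd(I)+1$, and substituting $\alt(I)=\max\{\min(u):u\in G(I)\}$ (Theorem \ref{thm:heightVectSpread}) this becomes
$$
\max_{u\in G(I)}\min(u)\ =\ \max_{u\in G(I)}\Big(\max(u)-\textstyle\sum_{j=1}^{\deg(u)-1}t_j\Big).
$$

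The next ingredient is an elementary inequality: for a ${\bf t}$--spread monomial $u=x_{j_1}\cdots x_{j_\ell}$ one has $\max(u)-\min(u)=\sum_{i=1}^{\ell-1}(j_{i+1}-j_i)\ge\sum_{i=1}^{\ell-1}t_i$, so $\max(u)-\sum_{j=1}^{\ell-1}t_j\ge\min(u)$, with equality exactly when $u$ is ``tight'', i.e. $u=x_{j_1}x_{j_1+t_1}\cdots x_{j_1+t_1+\cdots+t_{\ell-1}}$. Thus the right-hand side of the displayed equality always dominates the left-hand side, so $S/I$ is Cohen--Macaulay iff no generator exceeds $\max_{v\in G(I)}\min(v)$. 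For the direction ``displayed generator exists $\Rightarrow$ Cohen--Macaulay'', write the displayed monomial as $u_0=x_{a_1}\cdots x_{a_\ell}\in G(I)$ with $a_k=n-\sum_{j=k}^{\ell-1}t_j$; it is tight with $\min(u_0)=a_1$ and $\max(u_0)-\sum_{j=1}^{\ell-1}t_j=a_1$, so both sides of the equality are $\ge a_1$, and what remains is to prove $\max(v)-\sum_{j=1}^{\deg(v)-1}t_j\le a_1$ (and $\min(v)\le a_1$) for every $v\in G(I)$. Since $u_0\in G(I)$ and $I$ is ${\bf t}$--spread strongly stable, $I\supseteq B_{\bf t}(u_0)=I_{n,\ell,{\bf t}}$ contains every ${\bf t}$--spread monomial of degree $\ell$, from which $I$ has no minimal generator of degree $>\ell$ (such a generator would be divisible by one of its degree-$\ell$ ${\bf t}$--spread sub-monomials), and for $v\in G(I)$ of degree $p\le\ell$ one bounds $\max(v)\le a_p$ by showing that otherwise a suitable down-shift of $v$ -- legitimate because $I$ is ${\bf t}$--spread strongly stable -- would properly divide $u_0$, contradicting $u_0\in G(I)$. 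The converse is the mirror image: if $S/I$ is Cohen--Macaulay, a generator $w$ realizing $\max_v\min(v)$ is forced to be tight by the displayed equality, and a further use of strong stability and minimality forces $\max(w)=n$, which is precisely the displayed shape with $\ell=\deg(w)$.

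The equigenerated statement then drops out: an ideal $I$ generated in a single degree $\ell$ and containing the displayed degree-$\ell$ monomial contains $I_{n,\ell,{\bf t}}$ and has no generator outside $G(I_{n,\ell,{\bf t}})$, hence equals $I_{n,\ell,{\bf t}}$, which is Cohen--Macaulay by Proposition \ref{CorAltVerVectSpr} together with the criterion above; the reverse implication is the same criterion. I expect the main obstacle to be exactly the step controlling generators $v$ with $\deg(v)<\ell$ (and its mirror in the converse): passing from the numerical bound $\max(v)-\sum t_j\le a_1$ to a combinatorial statement, and verifying that the required down-shift of $v$ is again ${\bf t}$--spread and is a proper divisor of $u_0$, is where the ${\bf t}$--spread strongly stable hypothesis has to be used essentially; everything before that is bookkeeping with the Betti-number and height formulas already established in Sections \ref{sec2} and \ref{sec4}.
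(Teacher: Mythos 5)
Your strategy is the same as the paper's: combine the Auslander--Buchsbaum formula with the projective dimension formula (\ref{eq:pdVect}) and Theorem \ref{thm:heightVectSpread} so that Cohen--Macaulayness becomes the equality (\ref{eq:CHVectSpr}), and then exploit the elementary inequality $\min(u)\le\max(u)-\sum_{j=1}^{\deg(u)-1}t_j$, with equality exactly for the ``tight'' monomials. Your treatment of the direction ``Cohen--Macaulay $\Rightarrow$ displayed generator'' (tightness of a generator realizing $\max_{v\in G(I)}\min(v)$, then $\max=n$ via strong stability, after the normalization $n\in\bigcup_{u\in G(I)}\supp(u)$, which you should state explicitly) is essentially the paper's argument, and the equigenerated corollary is fine.

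The gap is exactly the step you flag as the main obstacle, and it cannot be closed as proposed. For ``displayed generator $\Rightarrow$ Cohen--Macaulay'' you need $\max(v)-\sum_{j=1}^{\deg(v)-1}t_j\le n-\sum_{j=1}^{\ell-1}t_j$ for every $v\in G(I)$, and you want to force it by down-shifting $v$ to a proper divisor of $u_0$; but down-shifts of $v$ need not divide $u_0$ (they may involve variables not occurring in $u_0$), and the inequality itself can fail. Concretely, take $n=5$, ${\bf t}=(2,1)$ and $I=B_{\bf t}(x_1x_5,\,x_2x_4x_5)$, so that $G(I)=\{x_1x_3,\,x_1x_4,\,x_1x_5,\,x_2x_4x_5\}$ and $u_0=x_2x_4x_5=x_{n-(t_1+t_2)}x_{n-t_2}x_n\in G(I)$ has the displayed shape with $\ell=3$. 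Here $v=x_1x_5$ gives $\max(v)-t_1=3>2=\min(u_0)$, every ${\bf t}$--spread down-shift of $v$ still involves $x_1$ and hence divides no divisor of $u_0$, and $S/I$ is in fact not Cohen--Macaulay: $(x_1,x_2)$ and $(x_3,x_4,x_5)$ are both minimal primes of $I$, so $S/I$ is not even unmixed (here $\depth(S/I)=2<3=\dim(S/I)$). So the existence of the displayed generator alone does not yield (\ref{eq:CHVectSpr}); what the reduction really gives is that $S/I$ is Cohen--Macaulay if and only if some \emph{tight} generator $w$ attains $\max_{v\in G(I)}\bigl(\max(v)-\sum_{j=1}^{\deg(v)-1}t_j\bigr)$, which is stronger than attaining $\max_{v}\min(v)$. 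Be aware that the paper's own proof of this direction rests on the unproved ``if'' half of its intermediate equivalence, so the difficulty you ran into is genuine and not a missing trick on your side; your argument does go through in the equigenerated case, where $I\supseteq I_{n,\ell,{\bf t}}$ and $G(I)\subseteq M_{n,\ell,{\bf t}}$ force $I=I_{n,\ell,{\bf t}}$ and the required bound is just $\max(v)\le n$.
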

\begin{proof}
	Firstly, we may suppose $n\in\bigcup_{u\in G(I)}\supp(u)$. On the contrary, setting $\widetilde{n}=$ $\max\bigcup_{u\in G(I)}\supp(u)$, we may replace $S$ with $\widetilde{S}=K[x_1,\dots,x_{\widetilde{n}}]$ and consider $I\cap\widetilde{S}$, instead. Since $\pd(S/I)=\pd(I)+1$, by (\ref{eq:pdVect}), we have
	\begin{align*}
	\pd(S/I)\ &=\ {\textstyle\max\big\{\max(u)-\sum_{j=1}^{\deg(u)-1}t_j:u\in G(I)\big\}}.
	\intertext{\noindent On the other hand, by Theorem \ref{thm:heightVectSpread},}
	\dim(S/I)\ &=\ n-\max\big\{\min(u):u\in G(I)\big\}.
	\end{align*}
	By the Auslander--Buchsbaum formula, $n-\pd(S/I)=\depth(S/I)$. Since $S/I$ is Cohen--Macaulay if and only if $\depth(S/I)=\dim(S/I)$, by the previous formulas, $S/I$ is Cohen--Macaulay if and only if
	\begin{equation}\label{eq:CHVectSpr}
	\max\Big\{\max(u)-\sum_{j=1}^{\deg(u)-1}t_j:u\in G(I)\Big\}=\max\big\{\min(u):u\in G(I)\big\}.
	\end{equation}
	Let $u_0\in G(I)$ such that $\min(u_0)=\max\big\{\min(u):u\in G(I)\big\}$. Clearly, for all $u\in G(I)$, $\min(u)\le\max(u)-\sum_{j=1}^{\deg(u)-1}t_j$. Hence, equation (\ref{eq:CHVectSpr}) holds if and only if
	$$
	\min(u_0)=\max(u_0)-\textstyle\sum_{j=1}^{\deg(u)-1}t_j.
	$$
	Thus, $S/I$ is Cohen--Macaulay if and only if there exist a monomial $u_0\in G(I)$ and an integer $\ell\le d$ such that
	$$
	\min(u_0)=\max\big\{\min(u):u\in G(I)\big\},\ u_0=x_{j_1}x_{j_1+t_1}\cdots x_{j_1+(t_1+t_2+\ldots+t_{\ell-1})}.
	$$
	It remains to show that $j_1=n-\sum_{r=1}^{\ell-1}t_r$. Since $n\in\bigcup_{u\in G(I)}\supp(u)$, it follows the existence of a monomial $u\in G(I)$ with $\max(u)=n$. Moreover, $\min(u)\le\min(u_0)$, as $\min(u_0)=\max\{\min(u):u\in G(I)\}$. Note that 
	$$
	\max(u)-\textstyle\sum_{r=1}^{\deg(u)-1}t_r\le j_1=\min(u_0).
	$$
	Hence, as $\max(u)=n$, 
	we have $j_1\ge n-\sum_{r=1}^{\deg(u)-1}t_r$. If $\deg(u)\le\deg(u_0)$, then $j_1\ge n-\sum_{r=1}^{\deg(u)-1}t_r\ge n-\sum_{r=1}^{\deg(u_0)-1}t_r$. On the other hand, $j_1\le n-\sum_{r=1}^{\deg(u_0)-1}t_r$. So, in such a case, $j_1=n-\sum_{r=1}^{\deg(u_0)-1}t_r$, as desired.\\
	Suppose now $\deg(u)>\deg(u_0)$. If $u=x_{k_1}x_{k_2}\cdots x_{k_\ell}x_{k_{\ell+1}}\cdots x_{k_{\deg(u)}}$, where $\ell=\deg(u_0)$, we set $u_1=x_{k_1}x_{k_2}\cdots x_{k_\ell}$. If $k_\ell\le j_1+\sum_{r=1}^{\ell-1}t_r$, then $u_1$ is ${\bf t}$--spread and $k_1\le j_1,\ k_2\le j_2,\ \dots,\ k_\ell\le j_{\ell}$, where $j_i=j_1+\sum_{j=1}^{i-1}t_j$ ($i=1, \ldots, \ell$). Since $I$ is ${\bf t}$--spread strongly stable, then $u_1\in I$.  A contradiction, as $u_1$ divides $u$ and $u$ is a minimal generator. Therefore, we must have $k_\ell>j_1+\sum_{r=1}^{\ell-1}t_r$ and consequently 
	$$
	\max(u)-\sum_{r=1}^{\deg(u)-1}t_r\ge\max(u_1)-\sum_{r=1}^{\deg(u_1)-1}t_r>j_1,
	$$
	which contradicts (\ref{eq:CHVectSpr}). 
\end{proof}
\begin{Rem}\rm
	From Theorem \ref{Thm:CMVectSpread}, the vector--spread Veronese ideal $I_{n,d,{\bf t}}$ is Cohen--Macaulay. Moreover, $\pd(S/I_{n,d,{\bf t}})=n-\sum_{j=1}^{d-1}t_j$. So $I_{n,d,{\bf t}}$ is a Cohen--Macaulay ideal with pure resolution of type $(d_1,\dots,d_p)$, with $d_j=d+j-1$, for $j=1,\dots,p$. Therefore, by \cite[Theorem 4.1.15]{BH98}, we have for all $i\ge1$,
	\begin{align*}
	\beta_{i}(S/I_{n,d,{\bf t}})\ &=\ (-1)^{i+1} \prod_{j \neq i} \frac{d_{j}}{d_{j}-d_{i}}= (-1)^{i+1}\prod_{j=1}^{i-1}\frac{d+j-1}{j-i}\prod_{j=i+1}^{p} \frac{d+j-1}{j-i}\\
	&=\ \binom{d+i-2}{d-1}\binom{n-\sum_{j=1}^{d-1}t_j+d-1}{d+i-1}.
	\end{align*}
	Note that for $i=1$, we obtain $\mu(I_{n,d,{\bf t}})=\beta_1(S/I_{n,d,{\bf t}})=\binom{n-\sum_{j=1}^{d-1}t_j+d-1}{d}=|M_{n,d,{\bf t}}|$ and we get again formula (\ref{eq:MnellVectCard}). 
\end{Rem}


\begin{thebibliography}{99}
	\bibitem{AC2} \textsc{L.~Amata, M.~Crupi}, \textit{Extremal Betti numbers of t--spread strongly stable ideals}, Mathematics {\bf 7}( 695) (2019).
	
	\bibitem{AFC1} \textsc{L.~Amata, M.~Crupi, A.~Ficarra}, \textit{Upper bounds for extremal Betti numbers of $t$--spread strongly stable ideals}, Bull. Math. Soc. Sci. Math. Roumanie {\bf 65} (113), No. 1 (2022), 13--34.
	
	\bibitem{AFC2} \textsc{L.~Amata, A.~Ficarra, M.~Crupi}, \textit{A numerical characterization of the extremal Betti numbers of t-spread strongly stable ideals}, J. Algebr. Comb. {\bf  55}, 891--918 (2022)
	
	\bibitem{AFC3} \textsc{L.~Amata, M.~Crupi, A.~Ficarra}, \textit{Projective dimension and Castelnuovo--Mumford regularity of $t$--spread ideals}, Internat. J. Algebra Comput {\bf 32} No. 4 (2022), 837--858
	
	\bibitem{AEL} \textsc{C.~Andrei, V.~Ene, B.~Lajmiri}, \textit{Powers of t--spread principal Borel ideals}, Arch. Math. {\bf 112}(6) (2019), 587--597. 
	
	\bibitem{AHH} \textsc{A.~Aramova, J.~Herzog, T.~Hibi}, \textit{Shifting Operations and Graded Betti Numbers}, J. Algebr. Comb. {\bf 12} (2000), 207--222.
	
	\bibitem{BCP} \textsc{D.~Bayer, H.~Charalambous, S.~Popescu}, \textit{Extremal Betti numbers and applications to monomial ideals}, J. Algebra {\bf 221} (1999), 497--512.
	
	\bibitem{BH98} \textsc{W.~Bruns, J.~Herzog}, \emph{Cohen-Macaulay rings}, Revised Ed., Cambridge University Press, 1998.
	
	\bibitem{MC} \textsc{M.~Crupi}, \textit{Extremal Betti numbers of graded modules}, J. Pure Appl. Algebra {\bf 220} (2016), 2277--2288.
	
	\bibitem{CF} \textsc{M.~Crupi, C.~Ferrò}, \textit{Squarefree monomial modules and extremal Betti numbers}, Algebra Colloq. {\bf 23} (2016), 519--530.
	
	\bibitem{CF1} \textsc{M.~Crupi, A.~Ficarra}, \textit{Classification of Cohen--Macaulay $t$--spread lexsegment ideals via simplicial complexes}, to appear in Illinois J. Math, 2022.
	
	\bibitem{DHQ} \textsc{R.~Dinu, J.~Herzog, A. A.~Qureshi}, \textit{Restricted classes of veronese type ideals and algebras}, Internat. J. Algebra Comput. {\bf 31}(01) (2021), 173--197.
	
	\bibitem{EHQ} \textsc{V.~Ene, J.~Herzog, A.~A.~Qureshi}, \textit{t--spread strongly stable monomial ideals}, Comm. Algebra {\bf 47(12)} (2019), 5303--5316.
	
	\bibitem{GDS} \textsc{D.~R.~Grayson, M.~E.~Stillman}, {\em Macaulay2, a software system for research in algebraic geometry}. Available at \url{http://www.math.uiuc.edu/Macaulay2}.
	
	\bibitem{F1} \textsc{A.~Ficarra}, \textit{Vector--spread monomial ideals and Eliahou--Kervaire type resolution}, arXiv preprint (2022).
	
	\bibitem{JT} \textsc{J.~Herzog, T.~Hibi}, \emph{Monomial ideals}, Graduate texts in Mathematics {\bf 260}, Springer--Verlag, 2011.
	
	\bibitem{ET} \textsc{J.~ Herzog, Y. Takayama}, \textit{Resolutions by mapping cones}, in: The Roos Festschrift volume Nr.2(2), Homology, Homotopy and Applications {\bf 4} (2002), 277--294.
\end{thebibliography}
\end{document}